\documentclass[11pt,reqno]{amsart}

\usepackage[T1]{fontenc}
\usepackage{lmodern}
\usepackage{amsmath,amssymb,amsthm}
\usepackage[letterpaper,margin=1in]{geometry}
\usepackage{microtype}
\usepackage[hidelinks]{hyperref}
\usepackage{enumitem}

\usepackage{mathtools}
\mathtoolsset{showonlyrefs=true}

\hypersetup{
  pdftitle={Complexity of output feedback stabilization},
  pdfsubject={Strong NP-hardness in continuous and discrete time and exponential encoding length of stabilizing controllers},
  pdfkeywords={output feedback stabilization, Hurwitz stability, strong NP-hardness, 3SAT}
}

\newtheorem{theorem}{Theorem}
\newtheorem{lemma}[theorem]{Lemma}

\theoremstyle{remark}

\theoremstyle{plain}
\newcommand{\R}{\mathbb R}
\newcommand{\C}{\mathbb C}
\newcommand{\Z}{\mathbb Z}
\newcommand{\Q}{\mathbb Q}

\DeclareMathOperator{\diag}{Diag}
\DeclareMathOperator{\Ree}{Re}

\newcommand{\norm}[1]{\left\lVert#1\right\rVert}
\newcommand{\abs}[1]{\left\lvert#1\right\rvert}

\newcommand{\T}{\mathsf T}
\newcommand{\sym}{\operatorname{sym}}
\newcommand{\SAT}{\textnormal{\textsc{3SAT}}}
\newcommand{\M}{\mathcal M}
\newcommand{\G}{\mathcal G}
\newcommand{\calC}{\mathcal C}
\allowdisplaybreaks[1]

\usepackage{xcolor}

\long\def\ac#1{{\color{black}#1}}

\title[Output feedback stabilization]{Complexity of output feedback stabilization}

\author[Amir Ali Ahmadi]{Amir Ali Ahmadi$^{*}$}
\author[Abraar Chaudhry]{Abraar Chaudhry$^{\dagger}$}
\author[Ijay Narang]{Ijay Narang$^{\ddagger}$}
\author[Yukai Tang]{Yukai Tang$^{*}$}

\thanks{$^{*}$Princeton University, Department of Operations
Research and Financial Engineering. Partially supported by the Princeton AI Lab Seed Grant, the Princeton SEAS Innovation Grant, and a Research Gift in Mathematical Optimization. 
Email: \texttt{aaa@princeton.edu}, \texttt{yt3846@princeton.edu}.}

\thanks{$^{\dagger}$University of Colorado Boulder,
Department of Applied Math.
Email: \texttt{abraar.chaudhry@colorado.edu}.}

\thanks{$^{\ddagger}$Georgia Institute of Technology,
School of Computer Science.
Email: \texttt{inarang3@gatech.edu}.}

\date{}
\begin{document}

\begin{abstract}
We show that unless $\mathrm P=\mathrm{NP}$, there cannot be a polynomial-time
(or even pseudo-polynomial-time) algorithm for output feedback
stabilization of a linear dynamical system with a linear controller. This settles one of the best-known open problems in control theory. The result holds in both continuous and discrete time. We also present a family of stabilizable linear dynamical systems for which no polynomial-time algorithm can write down a stabilizing controller in its standard representation. 

\end{abstract}

\maketitle

\emph{AI Declaration:}
On August 2, 2026, the authors realized that ChatGPT 5.6 could produce a one-shot proof of NP-hardness of output feedback stabilization. However, this proof and its initial variants were long and hard to verify, relied on techniques unfamiliar to us, and offered little intuition. We have since worked hand in hand with this tool to simplify the proof, make it more interpretable, and base it on tools familiar to most control theorists. This involved proposing plausible proof techniques, checking the resulting arguments, and replacing some with our own. The current proof gives a reduction directly from 3SAT, making the combinatorial source of the hardness clear. It also relies on nothing but basic matrix analysis and linear systems theory. The same general approach was applied to all results in this paper and the authors assume responsibility for all content. We believe that, given the remarkable pace of AI's progress in automated theorem proving, an emerging role for mathematicians is to teach AI which proofs come closer to being, as Erd\H{o}s put it, from ``The Book.''


\section{Introduction}


A real matrix is \emph{Hurwitz} if all its eigenvalues have negative real
parts, and \emph{Schur} if all its eigenvalues lie in the open
unit disk in the complex plane. For matrices $A\in\Q^{N\times N}$, $B\in\Q^{N\times p}$, and
$C\in\Q^{q\times N}$, the \emph{output feedback stabilization} problem in continuous time (resp. discrete time) asks whether
there exists some $K\in\R^{p\times q}$ that makes $A+BKC$ Hurwitz (resp.
Schur). This amounts to asymptotically stabilizing the linear dynamical system
$\dot x=Ax+Bu$, $y=Cx$, or its discrete-time analogue, using a static linear
control law $u=Ky$ that maps the observed output $y$ of the system to the control $u$. When the answer to this question is positive, one often also hopes to find a stabilizing ``feedback gain'' matrix $K$.

In their 1995 survey, Blondel, Gevers, and Lindquist \cite{BlondelGeversLindquist}
described output feedback stabilization as ``the most often mentioned
specific open problem'' in systems and control theory. When $C$ is the identity matrix, the problem reduces to \emph{state feedback stabilization} and admits
an efficient test based on the stabilizability of the pair $(A,B)$~\cite{Wonham}, or a polynomial-size semidefinite programming formulation that can also recover the feedback gain matrix~\cite[Section~7.2.1]{BoydEtAl}. In general, one can reformulate the output feedback stabilization problem as the problem of testing the feasibility of a system of polynomial inequalities\footnote{Such a formulation can be obtained, for example, by jointly searching for the controller matrix $K$ and a quadratic Lyapunov function for the closed-loop system, and then invoking Sylvester's criterion for matrix positive definiteness.} and hence the problem admits exponential-time algorithms; see, e.g.,~\cite{grigor1988solving}. However, no polynomial-time algorithm is known for output feedback stabilization. Blondel and Tsitsiklis \cite{BlondelTsitsiklis} proved NP-hardness when
the entries of $K$ are constrained to prescribed intervals, but left
the complexity of the unrestricted problem open. Chaudhry
\cite{Chaudhry} proved strong NP-hardness of the more general problem
of deciding whether an affine subspace of matrices contains a Hurwitz
(or a Schur) matrix. The affine subspaces in that reduction, however, do
not have the specific form $\{A+BKC:K\in\R^{p\times q}\}$ required
by output feedback stabilization.

In this paper, we prove that output feedback stabilization is strongly NP-hard. This result is established both in continuous time (Section~\ref{sec:ct}) and in discrete time (Section~\ref{sec:dt}). The implication of this result is that unless P=NP, there is no polynomial-time (or even pseudo-polynomial-time) algorithm for output feedback stabilization. In Section~\ref{sec:controller-size}, we also establish, in both continuous and discrete time, a representation obstruction to polynomial-time algorithms that is independent of any complexity-theoretic assumption: even when the system is stabilizable, a polynomial-time algorithm cannot, in general, write down a stabilizing feedback gain matrix in its natural representation.



All results in this paper are in the Turing model of computation, where
the input consists of the rational entries of the matrices $A,B,C$.
Each entry is represented as $a/b$, where $a$ is a signed integer and $b$ is a positive integer. The input size $L$ is the total number of bits needed to encode these numerators and denominators in binary, including the signs. Let $M$ be the maximum of $|a|$ and $b$ over all entries of $A,B,C$.
An algorithm is polynomial-time if its running time is
bounded by a polynomial in $L$, and pseudo-polynomial-time if its running time is bounded by a polynomial in $M$ and the dimensions of $A,B,C$. In particular, every polynomial-time algorithm is also pseudo-polynomial-time, though the converse is not true as pseudo-polynomial time algorithms can take exponential time in $L$. Our reductions establish NP-hardness even when $M$ is polynomially bounded in the matrix dimensions. On these instances, any pseudo-polynomial-time bound becomes a polynomial-time bound.
Thus, our NP-hardness results are in the strong sense and rule out pseudo-polynomial-time algorithms unless $\mathrm{P}=\mathrm{NP}$. See~\cite{GareyJohnson} for more details on these notions.

\section{Continuous-time hardness}\label{sec:ct}

In this section, we prove the following theorem.

\begin{theorem}\label{thm:main}
Given matrices
$A\in\Z^{N\times N}$, $B\in\Z^{N\times p}$, and $C\in\Z^{1\times N}$, deciding if there exists a matrix $K\in\R^{p\times1}$ that makes $A+BKC$ Hurwitz is
strongly NP-hard.
\end{theorem}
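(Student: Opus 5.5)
The plan is to reduce Theorem~\ref{thm:main} to a question about affine families of polynomials, and then to encode \SAT{} in such a family. Since $C=c^\T$ is a single row, $K=k\in\R^p$ and $A+Bkc^\T$ is a rank-one perturbation of $A$. The matrix determinant lemma gives
\[
\det(sI-A-Bkc^\T)=p_A(s)-\sum_{i=1}^p k_i\, c^\T\operatorname{adj}(sI-A)Be_i .
\]
So the closed-loop characteristic polynomial ranges over an affine family $p_0(s)+\sum_i k_i q_i(s)$ of monic degree-$N$ polynomials.

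The first step is the converse direction: any such family is realizable. Take $A$ to be the companion matrix of a monic integer polynomial $p_0$ and $c=e_N^\T$, so that $(A,c)$ is in observable canonical form. Then $c^\T\operatorname{adj}(sI-A)b$ is the polynomial whose coefficient vector is $b$, so $q_i$ is read off from the $i$-th column of $B$. Hence any monic $p_0\in\Z[s]$ of degree $N$ and any $q_1,\dots,q_p\in\Z[s]$ of degree $<N$ arise, with no growth in entry sizes. Theorem~\ref{thm:main} therefore reduces to showing the following problem is strongly NP-hard: given such integer data with entries polynomially bounded in $N$, does $p_0+\operatorname{span}\{q_i\}$ contain a Hurwitz polynomial?

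The second step is the \SAT{} encoding, for a formula with variables $x_1,\dots,x_n$ and clauses $\calC_1,\dots,\calC_m$. I would work in matrix form rather than with coefficients. I would build a block upper-triangular $A$ together with $b=Bk$ and $c$ chosen so that $A+bc^\T$ stays block triangular. Concretely, $c$ reads a single "broadcast" coordinate and $b$ injects the free gains only into rows of that same leading block, so that the spectrum is the union of the spectra of the diagonal blocks.
\begin{itemize}[leftmargin=*]
\item Variable gadget: for each $x_i$, a small block whose characteristic polynomial is Hurwitz only when a designated gain $k_i$ lies in a disconnected set with two components, near $+1$ and near $-1$. Such a block could be of the form $s^2+\epsilon s+(1-k_i^2)\cdot(\text{const})$, realized with the gain appearing twice through the shared output.
\item Clause gadget: for each clause, a block depending affinely on the three literal gains, stable iff their signed sum is $>-3+\delta$, that is, iff at least one literal is near $+1$.
\end{itemize}
The entries would be constants like $\pm1,\pm3$ and a fixed small $\epsilon$ scaled to integers, so the instance is of strong type.

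The main obstacle is the coupling. With one output row, every gain multiplies the same scalar signal $c^\T x$, so a gain cannot enter two different blocks as independent products. Getting a quadratic dependence $k_i^2$, which is needed for the two-component sign choice, from a rank-one update whose entries are affine in $k$ is exactly where naive block-triangular designs fail. My first attempt would use a "relay" chain. The broadcast state feeds, through fixed entries of $A$, into lower blocks, and each gadget's diagonal block is made to depend on $k$ through a Schur-complement/feedback loop of length two. This yields terms of the form $k_i\cdot k_i$ while keeping the off-diagonal coupling strictly triangular.

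If that fails, I would instead work purely at the polynomial level. I would choose $p_0$ and the $q_i$ so that the Hurwitz condition, checked via a Routh--Hurwitz/Hermite--Biehler interlacing argument on a well-separated spectrum, reduces to sign conditions on the $k_i$. Soundness then needs a perturbation argument, Rouché or eigenvalue continuity, showing that any Hurwitz member forces each $k_i$ near $\pm1$, from which a satisfying assignment is read off.
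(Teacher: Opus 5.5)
Your first step is correct: with $A$ in observable companion form and $c=e_N$, single-output stabilization is exactly the question of whether an affine family $p_0+\operatorname{span}\{q_i\}$ of monic polynomials contains a Hurwitz polynomial. That same observation, however, rules out the encoding you propose.

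\textbf{The block-triangular gadget design cannot work.} Since $\det(sI-A-Bkc^\T)$ is affine in $k$, a block-triangular closed loop gives
$\det(sI-A-Bkc^\T)=\prod_j\det(sI-M_{jj}(k))$. Total degree in $k$ (with coefficients in the integral domain $\R[s]$) adds under products. Hence at most one diagonal block can depend on $k$ at all, and only affinely. In your layout, where $c$ reads and $b$ writes only the leading block, this is directly visible. Every gadget block is then a constant matrix with $k$-independent eigenvalues. The relay entries only fill off-diagonal blocks, which do not affect the spectrum. A loop that genuinely feeds $k_i$ back twice destroys triangularity, and then the spectrum is no longer a union of gadget spectra. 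So no block can have a characteristic polynomial containing $1-k_i^2$. Even in isolation, $s^2+\epsilon s+\gamma(1-k_i^2)$ is Hurwitz on $\{|k_i|<1\}$ or on $\{|k_i|>1\}$, not on two small sets around $\pm1$.

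\textbf{The fallback is not a proof and threatens strong hardness.} Working at the polynomial level gives no construction. It also endangers the \emph{strong} part of the theorem. Your companion realization copies polynomial coefficients into $A$ and $B$. A polynomial with $\Theta(m)$ well-separated roots has coefficients of magnitude $m^{\Theta(m)}$; already $\prod_{i\le m}(s+i)$ has constant term $m!$. That would yield only weak NP-hardness.

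\textbf{The missing idea is to put the quadratic dependence on $k$ into a stability certificate rather than into the spectrum.} The paper keeps a state-space form $\M$ with $D=\diag(\lambda_iI_2-J)$, $\lambda_i=ih$, and $u,v$ affine in $k$. Its entries stay polynomially bounded even though its characteristic polynomial has exponentially large coefficients.
\begin{itemize}
\item It attaches the unique symmetric $X$ solving $Xc=v$, $DX-XD^\T=vu^\T-uv^\T$, whose entries are quadratic in $k$.
\item It proves that $\M$ Hurwitz implies $X\succeq0$, while $X\succ0$ together with $\sym(D+uc^\T)\succ0$ implies $\M$ Hurwitz.
\item A gluing lemma shows that the $(1,1)$ entry of the $i$th diagonal block of $X$ equals $(u_i)_2(v_i)_1-(u_i)_1(v_i)_2$ exactly, regardless of coupling. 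The separation $h$ keeps $X$ within $12m\beta^2/h$ of the block-diagonal matrix of local solutions.
\item Choosing $u_i=(1,3x_i)^\T$, $v_i=(x_i,1)^\T$ and $u_{n+j}=(-1,0)^\T$, $v_{n+j}=(0,L_j(x))^\T$ makes these entries $3x_i^2-1$ and $L_j(x)$.
\item So stabilizability forces $|x_i|\ge1/\sqrt3$ and $L_j(x)\ge0$, and sign rounding yields a satisfying assignment. You do not need $x_i$ near $\pm1$.
\item Conversely, a satisfying assignment gives $X\succ\frac1{12}I$.
\end{itemize}
Without some quadratic certificate with exactly controlled diagonal entries like this, your plan has no mechanism that forces the nonconvex sign choice.
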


Note that the theorem establishes strong NP-hardness of output feedback stabilization even when the system has a single output. The result trivially extends to systems with any fixed number of outputs. The proof will further show the claim even when $C$ is binary, every entry of $A$ has absolute value at most $256N^2$, and every entry of $B$ has absolute value at most $6$.

We shall prove Theorem~\ref{thm:main} by a reduction from 3SAT which proceeds in two steps. In Step~1, which is the easier step, we reduce 3SAT to the problem of simultaneous output feedback stabilization of multiple dynamical systems by a common controller. In Step~2, which is more involved, we reduce these simultaneous-stabilization instances to output feedback stabilization of a single system. Since the closed-loop system in all cases is linear, its asymptotic stability can be shown with algebraic certificates involving quadratic forms. This allows Step 2 to be carried out by relating the algebraic certificates for the two stabilization problems. 
%
We note that (weak) NP-hardness of simultaneous output feedback stabilization, corresponding to Step 1, was previously shown by Blondel and Tsitsiklis~\cite{BlondelTsitsiklis}. However, we need a different reduction both to establish strong NP-hardness, and because the construction used in Step 2 is closely intertwined with the particular simultaneous-stabilization instances produced in Step 1.






To present the proof formally, we begin by introducing some notation and establishing three technical lemmas.\footnote{The proof of Step 1 relies on Lemma~\ref{lem:certificate} and Lemma~\ref{lem:property-encoding}, while Step 2 relies on Lemma~\ref{lem:certificate}, Lemma~\ref{lem:completion}, and Lemma~\ref{lem:property-encoding}.} For a real matrix $E$, we write its transpose as $E^\T$ and $\sym(E)=(E+E^{\T})/2$. We denote its kernel by $\ker E$.
We use the notation
$\diag$ for block-diagonal assembly, $I_n$ for the $n\times n$ identity matrix, and $\norm{\cdot}$ for the Euclidean vector norm or its induced matrix norm. For a symmetric matrix $X$, the notation $X\succeq0$ (resp. $X\succ0$) denotes that $X$ is positive semidefinite (resp. positive definite); i.e., it has nonnegative (resp. positive) eigenvalues. For a positive integer $m$, denote the set $\{1,\ldots,m\}$ by $[m]$. For a complex number $z\in \C$, we denote its real part by $\Ree(z)$ and its conjugate by $\bar{z}$. For a complex matrix $E$, we denote its conjugate transpose by $E^*$. We use the notation $\vee$ for Boolean OR and $\wedge$ for Boolean AND.


Let us define
\(
J=\begin{pmatrix}0&0\\1&0\end{pmatrix}, e=\begin{pmatrix}0\\1\end{pmatrix}.
\)
For real scalars $\lambda_i$ and vectors $u_i,v_i\in\R^2$, $i=1,\ldots,m$, we define $D_i=\lambda_iI_2-J$, and
\begin{equation}\label{eq:stacks}
 D=\diag(D_1,\ldots,D_m)\in\R^{2m\times2m},\quad
 u=\begin{pmatrix}u_1\\\vdots\\u_m\end{pmatrix},\quad
 v=\begin{pmatrix}v_1\\\vdots\\v_m\end{pmatrix},\quad
 c=\begin{pmatrix}e\\\vdots\\e\end{pmatrix}\in\R^{2m}.
\end{equation}
Define
$\M\in\R^{4m\times4m}$ and $\calC\in\R^{1\times4m}$ by
\begin{equation}\label{eq:realization}
 \M=\begin{pmatrix}0&-D-uc^{\T}\\I_{2m}&-vc^{\T}\end{pmatrix},
 \qquad \calC=(0_{1\times2m},c^{\T}).
\end{equation}

The next lemma relates Hurwitzness of $\M$ to Lyapunov-type positive (semi)definiteness conditions on symmetric matrices.

\begin{lemma}[Hurwitzness of $\M$]\label{lem:certificate}
Let $m$ be a positive integer, $\lambda_1,\ldots,\lambda_m$ be distinct real scalars, and $u_i,v_i\in \R^2$ for $i\in[m]$. Let $D,u,v,c$ be defined as in~\eqref{eq:stacks} and $\M$ be defined as in~\eqref{eq:realization}.
Suppose $X=X^{\T}\in\R^{2m\times2m}$ satisfies
\begin{equation}\label{eq:completion}
 Xc=v,\qquad DX-XD^{\T}=vu^{\T}-uv^{\T}.
\end{equation}
Then the following claims hold:
\begin{enumerate}[label=\textup{(\roman*)},leftmargin=*,topsep=3pt,itemsep=2pt]
\item If $\M$ is Hurwitz, then $X\succeq0$.
\item If $X\succ0$ and $\sym(D+uc^{\T})\succ0$, then $\M$ is Hurwitz. 
\end{enumerate}
\end{lemma}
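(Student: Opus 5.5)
The plan is to work with the \emph{dual} Lyapunov equation for $\M$, using the block-diagonal candidate $Q=\diag(GX,\,X)$, where $G:=D+uc^{\T}$ so that $\M=\begin{pmatrix}0&-G\\ I_{2m}&-vc^{\T}\end{pmatrix}$. The first step is to rewrite the hypotheses~\eqref{eq:completion}. Since $Xc=v$, we have $vu^{\T}=Xcu^{\T}$ and $uv^{\T}=uc^{\T}X$. The second equation in~\eqref{eq:completion} therefore becomes $DX+uc^{\T}X=XD^{\T}+Xcu^{\T}$, that is,
\[
GX=XG^{\T}.
\]
Hence $GX$ is symmetric and $Q$ is a symmetric matrix. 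A direct block computation, using $c^{\T}X=v^{\T}$, gives $\M Q=\begin{pmatrix}0&-GX\\ GX&-vv^{\T}\end{pmatrix}$. Because $GX$ is symmetric, the off-diagonal blocks cancel in the sum, and
\[
\M Q+Q\M^{\T}=-2\,\diag(0,\,vv^{\T})\preceq 0 .
\]

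For (i), I will use the fact that when $\M$ is Hurwitz the Lyapunov equation $\M Q+Q\M^{\T}=-W$ has a unique solution, namely $Q=\int_0^\infty e^{\M t}We^{\M^{\T}t}\,dt$. With $W=2\diag(0,vv^{\T})\succeq0$ this integral is positive semidefinite. Our $Q$ solves this equation, so it equals the integral and $Q\succeq0$. In particular its diagonal block $X$ satisfies $X\succeq0$.

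For (ii), I first show $Q\succ0$. The matrix $GX$ is similar to $X^{1/2}GX^{1/2}$, whose symmetric part $X^{1/2}\sym(G)X^{1/2}$ is positive definite. Hence every eigenvalue of $GX$ has positive real part, and since $GX$ is symmetric this gives $GX\succ0$. Next, let $w=(w_1,w_2)\neq0$ satisfy $w^*\M=\mu w^*$. Multiplying the Lyapunov identity on the left by $w^*$ and on the right by $w$ gives
\[
2\Ree(\mu)\,w^*Qw=-2|w_2^*v|^2 .
\]
Since $w^*Qw>0$, this yields $\Ree\mu\le0$. It remains to exclude $\Ree\mu=0$, which would force $w_2^*v=0$.

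This exclusion is the main obstacle, and it is an observability-type argument. Writing out $w^*\M=\mu w^*$ blockwise and using $w_2^*v=0$ gives $w_2^*=\mu w_1^*$ and $-w_1^*G=\mu^2w_1^*$. If $\mu=0$, then $w_1^*G=0$; but $G$ is nonsingular because $\sym(G)\succ0$, so $w_1=0$, hence $w_2=0$, a contradiction. So $\mu\ne0$, and $w_1^*v=0$ and $G^{\T}w_1=\omega^2w_1$ with $\omega^2=-\mu^2>0$. Set $z=Xw_1$. The identity $GX=XG^{\T}$ gives $Gz=\omega^2z$, and $c^{\T}z=v^{\T}w_1=0$. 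Since $Gz=Dz+u(c^{\T}z)$, it follows that $Dz=\omega^2z$. Because the $\lambda_i$ are distinct, any eigenvector of the block-diagonal $D$ is supported on a single block $i$ with $\omega^2=\lambda_i$, and on that block it lies in $\ker J=\operatorname{span}\{e\}$. Such a vector has $c^{\T}z=e^{\T}z_i\ne0$, a contradiction unless $z=0$. Thus $Xw_1=0$, so $w_1=0$ by $X\succ0$, and then $w_2=0$, contradicting $w\neq0$. Hence every eigenvalue of $\M$ has negative real part, and $\M$ is Hurwitz.
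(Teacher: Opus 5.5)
Your proof is correct and follows the paper's route. Your $GX$ is the paper's $Y=EX$ and your $Q=\diag(GX,X)$ is its $H$; the Lyapunov identity, the positivity argument for $GX$, and the final reduction to eigenvectors of $D$ lying in $\operatorname{span}\{e\}$ are all the same as in the paper.

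The differences are minor. For (i) you use the integral solution of the Lyapunov equation, where the paper uses a trajectory argument. For (ii) you test left eigenvectors against $Q$, which forces you to transport via $z=Xw_1$ and to treat $\mu=0$ separately using invertibility of $G$. The paper instead tests right eigenvectors against $H^{-1}$, and checks observability of $(\M,\calC)$ from the structure of $\M$ alone, without using $X$.
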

\begin{proof}
We first prove (i). Let $E=D+uc^{\T}$, $Y=DX+uv^{\T}$, and $\calC$ be defined as in~\eqref{eq:realization}.
The second equation in~\eqref{eq:completion} gives $Y=Y^{\T}=EX$. Define
\(
H=\diag(Y,X).
\)
Since $Xc=v$, we observe that the matrix
\begin{equation}\label{eq:lyapunov-identity}
    \M H+H\M^{\T}
= -2(H\calC^\T)(H\calC^\T)^\T
\end{equation}
is negative semidefinite.
Suppose for the sake of contradiction that $X\not\succeq0$. Since $X$ is a principal submatrix of $H$, this implies $H \not\succeq0$. 
Hence, there exists
$z\in\R^{4m}$ and $\epsilon > 0$ such that $z^{\T}Hz\leq -\epsilon$. Along the trajectory of the system $\dot x=\M^{\T}x$
starting at $x(0)=z$, the quadratic form $V(x)=x^{\T}Hx$ satisfies
\[
\dot V(x)=x^{\T}(\M H+H\M^{\T})x.
\]
Since $\M H+H\M^{\T}$ is negative semidefinite, \(\dot V(x)\leq 0\). Thus, $V(x(t))\le V(z)\leq -\epsilon$ for all $t\ge0$. However, $\M^\T$ being Hurwitz implies that
$x(t)\to0$, hence $V(x(t))\to0$, which is a contradiction, proving~\textup{(i)}.


We now prove~\textup{(ii)}. Assume $X\succ0$ and $E+E^{\T}\succ0$. 
We first show that $Y\succ 0$. Suppose for the sake of contradiction that $Y\not\succ 0$. Since $Y$ is symmetric, there is a nonzero vector $z$ and $\lambda \leq 0$ such that $Yz = \lambda z$. Then, we have
\[
 z^\T (X^{-1}Y+YX^{-1}) z = 2\lambda z^\T X^{-1}z \leq 0. 
\]
This is a contradiction since 
\[
X^{-1}Y+YX^{-1}=E^{\T}+E\succ0.
\] 
Therefore, $Y\succ 0$ and
\(
H=\diag(Y,X)\succ0.
\)
We next show that no eigenvector of $\M$ lies in $\ker\calC$.
Suppose, for contradiction, that there exist $\mu\in\C$ and
$(p^\T,q^\T)^\T\in\C^{4m}\setminus\{0\}$ such that
\[
\M\begin{pmatrix}p\\q\end{pmatrix}
=
\mu\begin{pmatrix}p\\q\end{pmatrix},
\qquad
\calC\begin{pmatrix}p\\q\end{pmatrix}
=
c^\T q=0.
\]
By the definition of $\M$, the eigenvalue equation gives
\[
-(D+uc^\T)q=\mu p,
\qquad
p-vc^\T q=\mu q.
\]
Since $c^\T q=0$, these equations reduce to
\(
-Dq=\mu p,\
p=\mu q,
\)
and hence
\(
Dq=-\mu^2q.
\)
Write $q=(q_1^\T,\ldots,q_m^\T)^\T$, with $q_j\in\C^2$.
Since $D_j=\lambda_jI_2-J$, for every $j\in[m]$ we have
\begin{equation}\label{eq:preceding-equation}
    \bigl((\lambda_j+\mu^2)I_2-J\bigr)q_j=0.
\end{equation}
If $\lambda_j+\mu^2\neq0$, then since $J$ is nilpotent, we have $q_j = 0$. Since the $\lambda_j$ are distinct, at most one block
$q_i$ can be nonzero. Since $(p^\T,q^\T)^\T\neq0$, there is $i\in[m]$ such that
$\lambda_i+\mu^2=0$. Equation~\eqref{eq:preceding-equation} gives $Jq_i=0$.
As $\ker J=\operatorname{span}\{e\}$, we have $q_i=\alpha e$ for
some $\alpha\in\C$. But then
\[
0=c^\T q=e^\T q_i=\alpha,
\]
so $q_i=0$. Thus $q=0$, and consequently $p=\mu q=0$, contradicting
$(p^\T,q^\T)^\T\neq0$. Therefore every nonzero eigenvector $z$ of $\M$ satisfies
$\calC z\neq0$. 

Let $\mu\in\C$ be an eigenvalue of $\M$, and let
$z\in\C^{4m}\setminus\{0\}$ be a corresponding eigenvector. We have
\[
-2z^*\calC^{\T}\calC z = z^*\M^{\T}H^{-1}z+z^*H^{-1}\M z = \bar\mu z^*H^{-1}z + \mu z^*H^{-1} z = 2\Ree(\mu)\,z^*H^{-1}z,
\]
 where the second equality follows from the fact that  $\M$ is real. Since $H \succ 0$, this gives $\Ree(\mu) \leq 0$.
If $\Ree(\mu)=0$, then $\calC z=0$, contradicting the fact we showed above. Hence every eigenvalue of $\M$ has
strictly negative real part, so $\M$ is Hurwitz.

\end{proof}


For $a=(a_1,a_2)^\T\in\R^2$ and $b=(b_1,b_2)^\T\in\R^2$, let us define the matrix
\begin{equation}\label{eq:def-G-a-b}
\G(a,b)=
\begin{pmatrix}
a_2b_1-a_1b_2 & b_1\\
b_1 & b_2
\end{pmatrix}.
\end{equation}
To motivate this definition, we note that
\[ \G(a,b)e = b,\qquad -J\G(a,b) + \G(a,b)J^\T = ba^\T - ab^\T.\]
Thus, $G_i = \G(u_i,v_i)$ is a natural solution of~\eqref{eq:completion} when $m = 1$.
We now develop a `gluing' lemma that allows one to combine these one-block solutions to obtain a matrix $X$ satisfying the linear equations in~\eqref{eq:completion}. When parameters $\lambda_i$ are sufficiently separated, $X$ remains close to the block diagonal matrix $G_0 = \diag(G_1,\ldots,G_m)$ while preserving the $(1,1)$ entry of each diagonal block exactly. 


\begin{lemma}[Gluing lemma]\label{lem:completion}
Let $m$ be a positive integer, $\lambda_1,\ldots,\lambda_m$ be distinct real scalars, and $u_i,v_i\in \R^2$ for $i\in[m]$. Let $D,u,v,c$ be defined as in~\eqref{eq:stacks} and $\G(\cdot,\cdot)$ be defined as in~\eqref{eq:def-G-a-b}. Let $G_i=\G(u_i,v_i)$ for $i\in [m]$, and
\(
G_0=\diag(G_1,\ldots,G_m) \in \R^{2m\times 2m}.
\)
Then, there is a unique symmetric matrix $X\in\R^{2m\times2m}$
satisfying~\eqref{eq:completion}. Furthermore, writing
$X=(X_{ij})_{i,j=1}^m$ in $2\times2$ blocks, we have
\begin{equation}\label{eq:preservation-property}
    (X_{ii})_{11}=(G_i)_{11}\quad(1\le i\le m).
\end{equation}
Let $\beta=\max_{1\le i\le m}\{\norm{u_i},\norm{v_i}\}$. In addition, if $h\ge4$ and 
\(
|\lambda_i-\lambda_j|\ge h\) for all \(i\ne j,
\)
then
\begin{equation}\label{eq:completion-bounds}
 \norm{X-G_0}\le \frac{12m\beta^2}{h}.
\end{equation}
\end{lemma}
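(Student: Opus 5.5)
We will construct $X$ block by block and read off uniqueness, \eqref{eq:preservation-property}, and \eqref{eq:completion-bounds} from the construction. Write $X=(X_{ij})$ in $2\times2$ blocks, with $X_{ji}=X_{ij}^\T$. Since $D_i=\lambda_iI_2-J$, the $(i,j)$ block of the second equation in \eqref{eq:completion} reads
\[
L_{ij}(X_{ij}) := (\lambda_i-\lambda_j)X_{ij}+N(X_{ij}) = R_{ij},\qquad N(Y):=YJ^\T-JY,\quad R_{ij}:=v_iu_j^\T-u_iv_j^\T .
\]
The first equation reads $\sum_j X_{ij}e=v_i$ for each $i$. Transposing the $(i,j)$ equation gives exactly the $(j,i)$ equation, because $R_{ji}=-R_{ij}^\T$ and $N(Y^\T)=-N(Y)^\T$. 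So it suffices to treat $i\le j$ and define $X_{ji}:=X_{ij}^\T$.

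\emph{Off-diagonal blocks.} $N$ is a linear map on $\R^{2\times2}$ built from the nilpotent $J$. It is nilpotent ($N^3=0$, since $J^2=0$), and $\norm{N(Y)}\le2\norm{Y}$. For $i\neq j$ put $\delta=\lambda_i-\lambda_j\neq0$. Then $L_{ij}=\delta I+N$ is invertible, with
\[
L_{ij}^{-1}=\delta^{-1}\bigl(I-N/\delta+N^2/\delta^2\bigr),
\]
so $X_{ij}$ is uniquely determined. If $|\delta|\ge h\ge4$, then $\norm{X_{ij}}\le \frac{\norm{R_{ij}}}{h}(1+\tfrac12+\tfrac14)\le \frac{7\beta^2}{2h}$, using $\norm{R_{ij}}\le2\beta^2$.

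\emph{Diagonal blocks.} For symmetric $Y=\begin{pmatrix}x&y\\y&z\end{pmatrix}$ a direct computation gives $N(Y)=\begin{pmatrix}0&x\\-x&0\end{pmatrix}$. Hence the $(i,i)$ equation $N(X_{ii})=R_{ii}$, whose right side is antisymmetric, fixes exactly $x=(R_{ii})_{12}=v_{i1}u_{i2}-u_{i1}v_{i2}=(G_i)_{11}$; this is \eqref{eq:preservation-property}. It leaves $y,z$ free. These are the entries of $X_{ii}e$, and the constraint $Xc=v$ forces
\[
X_{ii}e=v_i-\sum_{j\ne i}X_{ij}e .
\]
This determines $y,z$ uniquely and preserves symmetry, which gives existence and uniqueness. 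Comparing with $G_ie=v_i$, the difference $X_{ii}-G_i$ has the form $\begin{pmatrix}0&w_1\\w_1&w_2\end{pmatrix}$ with $w=-\sum_{j\ne i}X_{ij}e$. Therefore $\norm{X_{ii}-G_i}\le 2\norm{w}\le 2(m-1)\cdot\frac{7\beta^2}{2h}$.

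\emph{Assembling the bound.} Split $X-G_0=F+O$, where $F=\diag(X_{ii}-G_i)$ and $O$ collects the off-diagonal blocks. Then $\norm{F}\le 7(m-1)\beta^2/h$. For $O$, bound the operator norm by the max row-block sum (symmetric, so Schur test), giving $\norm{O}\le (m-1)\cdot\frac{7\beta^2}{2h}$. The total is $\le \frac{21}{2}(m-1)\beta^2/h\le 12m\beta^2/h$.

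The main obstacle is purely the constant bookkeeping: choosing the block-norm estimates, namely the Schur test for $O$ and the $2\norm{w}$ bound for the rank-two symmetric corrections, sharply enough to land under $12m\beta^2/h$. The algebraic part — invertibility of $L_{ij}$ and the observation that $N$ on symmetric matrices sees only the $(1,1)$ entry — should be routine.
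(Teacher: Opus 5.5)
Your proposal is correct and follows essentially the same route as the paper: solve the off-diagonal Sylvester equations uniquely, pin each diagonal block's $(1,1)$ entry from the antisymmetric $(i,i)$ equation and its second column from $X_{ii}e=v_i-\sum_{j\ne i}X_{ij}e$, then bound $\norm{X-G_0}$ with a block row-sum (Schur test) estimate. The differences are minor: you invert $\delta I+N$ explicitly using $N^3=0$ instead of citing Sylvester-equation theory, which makes the argument self-contained and gives the slightly sharper $\norm{X_{ij}}\le 7\beta^2/(2h)$ in place of $4\beta^2/h$; and you obtain uniqueness directly from the construction rather than through the paper's separate difference argument.
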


\begin{proof}
We first construct a solution $X$ to~\eqref{eq:completion}. Assume $i,j\in [m]$ and $i\neq j$.
By expanding the second equation in~\eqref{eq:completion}, the $(i,j)$ block $X_{ij}$ in a solution $X$ satisfies
\begin{equation}\label{eq:sylvester}
 (\lambda_i-\lambda_j)X_{ij}-JX_{ij}+X_{ij}J^{\T}
       =v_i u_j^{\T}-u_i v_j^{\T}.
\end{equation}
\ac{The existence and uniqueness of the solution of equation~\eqref{eq:sylvester} follow from the distinctness of $\lambda_i$ and $\lambda_j$ by, e.g., \cite[p.231]{Bellman}.}
Taking the transpose of~\eqref{eq:sylvester} and exchanging $i$ and $j$ shows, by uniqueness, that $X_{ji} = X_{ij}^\T$.

It remains to construct the diagonal submatrices $X_{ii}$.
For $z=(z_1,z_2)^{\T}\in\R^2$, let
$\Gamma(z)=\left(\begin{smallmatrix}0&z_1\\z_1&z_2\end{smallmatrix}\right)$. 
Set
\begin{equation}\label{eq:diagonal-correction}
 t_i=\sum_{\substack{1\le j\le m\\j\ne i}}X_{ij}e\in\R^2.
\end{equation}
We show that $X_{ii}=G_i-\Gamma(t_i)$ will satisfy~\eqref{eq:completion} and~\eqref{eq:preservation-property}. For~\eqref{eq:preservation-property}, since $\Gamma(z)_{11}=0$, we have $(X_{ii})_{11}=(G_i)_{11}$. 
For~\eqref{eq:completion}, direct calculation gives
\begin{equation}\label{eq:local-identities}
 \begin{gathered}
 G_i e=v_i,\ -JG_i+G_iJ^{\T}=v_i u_i^{\T}-u_i v_i^{\T},\ 
 \Gamma(z)e=z,\ J\Gamma(z)=\Gamma(z)J^{\T}.
 \end{gathered}
\end{equation}
Thus~\eqref{eq:diagonal-correction} enforces $Xc=v$ and $D_{i}X_{ii} - X_{ii}D_{i}^\T = v_iu_i^\T - u_iv_i^\T$.

Next we show that the symmetric solution $X$ is unique.
Suppose $X$ and $\widetilde X$ are two symmetric solutions of
\eqref{eq:completion}, and let $Z=X-\widetilde X$. Then, we have
\[
Zc=0,\qquad DZ-ZD^\T=0.
\]
The off-diagonal blocks are zero by the uniqueness of the solution to
\eqref{eq:sylvester}. For $i\in [m]$, write the $(i,i)$ block of $Z$ as
\(
Z_{ii}=\begin{pmatrix}a&b\\ b&d\end{pmatrix}.
\)
Then $DZ-ZD^\T=0$ gives
\[
-JZ_{ii}+Z_{ii}J^\T
=
\begin{pmatrix}0&a\\-a&0\end{pmatrix}=0,
\]
so $a=0$. Since all off-diagonal blocks of $Z$ are $0$, $Zc=0$ gives $Z_{ii}e=0$, and hence $b=d=0$.
Thus, $Z_{ii}=0$ for every $i$, so $Z=0$.

We now show that the matrix $X$ constructed above satisfies the bound in~\eqref{eq:completion-bounds}.
Applying the triangle inequality to~\eqref{eq:sylvester} yields
\[
|\lambda_i-\lambda_j|\,\|X_{ij}\|
\le
\|JX_{ij}\|
+
\|X_{ij}J^{\T}\|
+
\|v_i u_j^{\T}\|
+
\|u_i v_j^{\T}\|.
\]
By definition of $\beta$ and the fact that $\|J\| = 1$, we have \((|\lambda_i-\lambda_j|-2)\norm{X_{ij}}\le2\beta^2.\)
Since $|\lambda_i-\lambda_j|\ge h$ and $h\ge4$, we have
$|\lambda_i-\lambda_j|-2\ge h/2$, and hence
\begin{equation}\label{eq:off-diagonal-bound}
 \norm{X_{ij}}\leq \frac{4\beta^2}{h}.
\end{equation}




We then bound $\|X-G_0\|$. First we have
\[
\|X_{ii}-G_i\|
=\|\Gamma(t_i)\|
\le 2\|t_i\|
\le 2\sum_{j\ne i}\|X_{ij}\|.
\]
Since $X-G_0$ is symmetric, for any block
vector $z=(z_1^\T,\ldots,z_m^\T)^\T$ we have
\[
\begin{aligned}
\bigl|z^T(X-G_0)z\bigr|
&\le
\sum_i \|X_{ii}-G_i\|\,\|z_i\|^2
+\sum_{i\ne j}\|X_{ij}\|\,\|z_i\|\,\|z_j\| \\
&\le
\sum_i
\left(
\|X_{ii}-G_i\|+\sum_{j\ne i}\|X_{ij}\|
\right)\|z_i\|^2,
\end{aligned}
\]
where the second inequality uses
$2\|z_i\|\|z_j\|\le \|z_i\|^2+\|z_j\|^2$ and
$\|X_{ij}\|=\|X_{ji}\|$. Therefore, using \eqref{eq:off-diagonal-bound}, we have
\[
\|X-G_0\|
\le
\max_i\left(
\|X_{ii}-G_i\|+\sum_{j\ne i}\|X_{ij}\|
\right) \le
3\max_i\sum_{j\ne i}\|X_{ij}\|
\le
\frac{12m\beta^2}{h}.
\]

\end{proof}

The proof of Theorem~\ref{thm:main} relies on a reduction from \SAT, which is well known to be (strongly) NP-hard~\cite{GareyJohnson}.
Recall that \SAT\ is a Boolean satisfiability problem in which a formula is expressed as a conjunction of clauses, where each clause consists of exactly three literals connected by logical OR. The problem asks whether there exists an assignment of truth values to the variables that makes every clause true. More specifically, a \SAT\ instance $\Phi$ with variables \(
x_1,\ldots,x_n
\)
and clauses
\(
C_1,\ldots,C_r\) is of the form
\[
\Phi = C_1 \wedge \ldots\wedge C_r,
\]
the $j$th clause reads \(
C_j=\ell_{j1}\vee \ell_{j2}\vee \ell_{j3}
\). Here, each $\ell_{jk}$ is called a literal and is either one of the variables
$x_1,\ldots,x_n$ or the negation of one of these variables. We identify a Boolean
assignment with a vector $x=(x_1,\ldots,x_n)\in\{\pm1\}^n$, where $x_i=1$
means that the $i$th variable is True and $x_i=-1$ means that it is False.
The \SAT\ instance $\Phi$ is satisfiable if there exists such an assignment
$x\in\{\pm1\}^n$ under which every clause $C_j$ evaluates to True.


For each clause index $j\in[r]$ and literal position
$k\in[3]$, define
\[
y_{jk}(x)
=
\begin{cases}
x_i, & \ell_{jk}=x_{i},\\
-x_i, & \ell_{jk}=\neg x_{i}.
\end{cases}
\]
Then,
\(
y_{jk}(x) = 1
\) when $\ell_{jk}$ is true under the
Boolean assignment $x$, and equals $-1$ when $\ell_{jk}$ is
false. For $j\in [r]$, we define an affine clause score function 
\begin{equation}\label{eq:clause-form}
 L_j(x)=3+2\sum_{k=1}^3y_{jk}(x).
\end{equation}
On Boolean inputs, \(L_j(x)=-3\) when clause \(C_j\) is false, while \(L_j(x)\in\{1,5,9\}\) when it is true.
For $i\in [n]$, we define
\begin{equation}\label{eq:def-u-v-g-1}
    u_i(x) = \begin{pmatrix}1\\3 x_i\end{pmatrix},\  v_i(x)=\begin{pmatrix}x_i\\1\end{pmatrix}, \ G_i(x)=\G(u_i(x),v_i(x)) = \begin{pmatrix}3x_i^2-1&x_i\\x_i&1\end{pmatrix}.
\end{equation}
For $j\in [r]$, we define 
\begin{equation}\label{eq:def-u-v-g-2}
u_{n+j}(x) =\begin{pmatrix}-1\\0\end{pmatrix},\ v_{n+j}(x)=\begin{pmatrix}0\\L_j(x) \end{pmatrix},\ G_{n+j}(x)=\G(u_{n+j}(x),v_{n+j}(x)) = L_j(x)I_2.    
\end{equation}

\begin{lemma}[Properties of the \SAT\ encoding]\label{lem:property-encoding}
    Suppose $\Phi$ is a \SAT\ instance with $n$ variables and $r$ clauses. For $i\in [n+r]$ and $x\in \R^n$, let $u_i(x), v_i(x)$ and $G_i(x)$ be defined as in~\eqref{eq:def-u-v-g-1} and~\eqref{eq:def-u-v-g-2}. Then, the following statements hold:
    \begin{enumerate}[label=\textup{(\roman*)},leftmargin=*,topsep=3pt,itemsep=2pt]
\item If $x$ is a satisfying Boolean assignment to $\Phi$, then $$G_i(x)\succeq \frac{1}{3}I_2 \quad(1\leq i\leq n+r) \ \text{  and  } \ \max_{1\le i\le n+r}\{\norm{u_i(x)},\norm{v_i(x)}\}\le9.$$ 
\item If there exists $x\in \R^n$ such that $(G_k(x))_{11}\geq 0$ for all $k\in [n + r]$, then $\Phi$ is satisfiable.
\end{enumerate}
\end{lemma}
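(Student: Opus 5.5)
Both parts reduce to direct computation with the explicit formulas in~\eqref{eq:def-u-v-g-1} and~\eqref{eq:def-u-v-g-2}. No earlier lemma is needed beyond the definition of $\G$.

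For~(i), let $x\in\{\pm1\}^n$ be a satisfying assignment. For a variable block $i\le n$ we have $x_i^2=1$, so
\[
G_i(x)=\begin{pmatrix}2&x_i\\x_i&1\end{pmatrix}.
\]
Its trace is $3$ and its determinant is $1$, so its eigenvalues are $(3\pm\sqrt5)/2$. The smaller one is $(3-\sqrt5)/2\approx0.38\ge 1/3$. (Equivalently, $G_i(x)-\tfrac13 I_2$ has diagonal entries $5/3,2/3$ and determinant $10/9-1>0$.)

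For a clause block, $G_{n+j}(x)=L_j(x)I_2$. Since $x$ satisfies $C_j$, we have $L_j(x)\in\{1,5,9\}$, so $G_{n+j}(x)\succeq I_2\succeq\tfrac13 I_2$.

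For the norm bound, $\norm{u_i(x)}=\sqrt{10}$ and $\norm{v_i(x)}=\sqrt2$ for $i\le n$. For the clause blocks, $\norm{u_{n+j}}=1$ and $\norm{v_{n+j}}=|L_j(x)|\le9$. All of these are at most $9$.

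For~(ii), suppose $x\in\R^n$ has $(G_k(x))_{11}\ge0$ for all $k$. The variable blocks give $3x_i^2-1\ge0$, i.e.\ $|x_i|\ge 1/\sqrt3$. The clause blocks give $L_j(x)\ge0$. Define the Boolean assignment $\hat x_i=\operatorname{sign}(x_i)$, which is well defined since $x_i\neq0$.

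We claim $\hat x$ satisfies every clause. Suppose instead that some clause $C_j$ is false under $\hat x$. Then every literal value $y_{jk}(x)$ has the same sign as $y_{jk}(\hat x)=-1$, so $y_{jk}(x)\le -1/\sqrt3$ for $k=1,2,3$. Hence
\[
L_j(x)\le 3-\tfrac{6}{\sqrt3}=3-2\sqrt3<0,
\]
which contradicts $L_j(x)\ge0$. So $\Phi$ is satisfiable.

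The only step requiring care is this rounding argument in~(ii). The threshold $1/\sqrt3$ forced by the variable blocks must be large enough to make $L_j$ negative on a falsified clause, i.e.\ we need $6/\sqrt3>3$. This holds, but not by a wide margin, and it is exactly why the constants $3$ and $2$ appear in the definitions of $u_i$ and $L_j$.
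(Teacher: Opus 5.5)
Your proposal is correct and matches the paper's proof: you do a direct computation for (i), and for (ii) you round via $\hat x_i=\operatorname{sign}(x_i)$ using $|x_i|\ge 1/\sqrt3$. The only cosmetic difference is in the last step of (ii): you use $y_{jk}(x)\le -1/\sqrt3$ directly, whereas the paper first weakens this to $y_{jk}(x)<-1/2$ before concluding $L_j(x)<0$.
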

\begin{proof}
    If $x$ is a satisfying Boolean assignment for $\Phi$, then for $i \in [n]$, $x_i = \pm 1$. Therefore, $G_i(x) = \begin{pmatrix}2&1\\1&1\end{pmatrix}$ or $ \begin{pmatrix}2&-1\\-1&1\end{pmatrix}$, both satisfy $G_i(x)\succeq \frac{1}{3}I_2$. We have $\norm{u_i(x)} = \sqrt{10}$ and $\norm{v_i(x)} = \sqrt{2}$.
    Since $x$ is a satisfying Boolean assignment for $\Phi$, for all $j\in[r]$ we have $L_j(x) \in \{1,5,9\}$. Thus, $G_{n+j}(x)\succeq \frac{1}{3}I_2$,  $\norm{u_{n+j}(x)} = 1$ and $\norm{v_{n+j}(x)} \leq 9$. This proves claim (i).

    If there exists $x\in\R^n$ such that $(G_k(x))_{11}\geq 0$ for all $k\in [n + r]$, then  $3x_i^2 - 1\geq 0$ for all $i\in[n]$ and $L_j(x)\ge0$ for all $j\in [r]$. For every index $i\in [n]$, we construct $\hat{x}_i = \operatorname{sign}(x_i)$. We prove that $\hat{x}$ is a satisfying assignment to $\Phi$. Suppose for the sake of contradiction that there exists $j\in[r]$ such that $L_j(\hat{x}) < 0$. Then we know that
    \[y_{j1}(\hat{x})=-1,\ y_{j2}(\hat{x})=-1,\ y_{j3}(\hat{x})=-1.\]
    Since $|x_i|\geq \frac{1}{\sqrt{3}} > \frac{1}{2}$, we have
    \[y_{j1}(x)< -\frac{1}{2},\ y_{j2}(x)< -\frac{1}{2},\ y_{j3}(x)< -\frac{1}{2}.\]
    However, this gives
    \(L_j(x) < 0\), a contradiction. Thus, $L_j(\hat{x}) \geq 0$ for all $j\in [r]$, which indicates that $C_j$ is true under assignment $\hat x$ for all $j\in [r]$.
    This proves claim (ii).
    \end{proof}

We can now present the proof of Theorem~\ref{thm:main}.
\begin{proof}[Proof of Theorem~\ref{thm:main}]

\smallskip
\noindent\emph{Step 1: Simultaneous stabilization.}

Given a \SAT\ instance $\Phi$ with $n$ variables and $r$ clauses, let $m = n +r$. Define $h=4096m$ and for $i \in [m]$ define $\lambda_i = ih$ and $D_i = \lambda_i I_2 - J$. Let $u_i(x), v_i(x)$, and $G_i(x)$ be defined as in~\eqref{eq:def-u-v-g-1} and~\eqref{eq:def-u-v-g-2}. For notational convenience, for $i\in [n+r]$, we write $u_i(x)=u_i^0+U_ix$ and $v_i(x)=v_i^0+V_ix$, where
$u_i^0,v_i^0\in\Z^2$ and $U_i,V_i\in\Z^{2\times n}$.
Construct the following matrices $A_i\in\Z^{4\times4}$,
$B_i\in\Z^{4\times n}$, and $C_i\in\Z^{1\times4}$:
\begin{equation}\label{eq:local-plants}
 A_i=\begin{pmatrix}0&-D_i-u_i^0e^{\T}\\I_2&-v_i^0e^{\T}\end{pmatrix},
 \qquad B_i=-\binom{U_i}{V_i},\qquad C_i=(0,0,0,1).
\end{equation}
We show that $\Phi$ is satisfiable if and only if there exists $x\in\R^n$ such that $A_i + B_ixC_i$ is Hurwitz for all $i\in [m]$.

If $\Phi$ is satisfiable, let $x$ be a satisfying Boolean assignment.
We claim that for every $i \in [m]$, $A_i+B_ixC_i$ is Hurwitz. A straightforward calculation gives
\[
\begin{aligned}
A_i+B_i x C_i
&=
\begin{pmatrix}
0 & -D_i-u_i(x)e^\T\\
I_2 & -v_i(x)e^\T
\end{pmatrix}.
\end{aligned}
\]
 One can verify that for all $i\in [m]$, we have $D_iG_i(x) - G_i(x)D_i^\T = v_i(x)u_i(x)^\T -  u_i(x)v_i(x)^\T$ and $G_i(x)e = v_i(x)$. By Lemma~\ref{lem:property-encoding}(i), $G_i(x) \succ 0$ and $\|u_i(x)\| \leq 9$ for all $i$. Therefore,
\[
 \sym(D_i+u_i(x) e^{\T})
 \succeq(\lambda_i-1-\norm{u_i(x)})I_2
 \succeq(h-10)I_2\succ0.
\]
Then, applying Lemma~\ref{lem:certificate}(ii) with $\lambda_i, u_i(x), v_i(x),D_i$ shows that $A_i+B_ixC_i$ is Hurwitz for all $i\in [m]$.

Conversely, suppose there exists $x\in\R^n$ such that  $A_i+B_ixC_i$ is Hurwitz for all $i\in [m]$. Lemma~\ref{lem:certificate}(i) shows that $G_i(x)\succeq0$ for all $i\in[m]$. Thus, $(G_i(x))_{11} \geq 0$ for all $i\in [m]$. Applying Lemma~\ref{lem:property-encoding}(ii) gives that $\Phi$ is satisfiable.

\smallskip
\noindent
\emph{Step 2: Output feedback stabilization.}

We now combine the matrices constructed in Step 1 in a specific way to conclude the proof. We define 
\[
u^0:=
\begin{pmatrix}
u_1^0\\
\vdots\\
u_m^0
\end{pmatrix}\in\Z^{2m},
\
v^0:=
\begin{pmatrix}
v_1^0\\
\vdots\\
v_m^0
\end{pmatrix}\in\Z^{2m}, \ U:=
\begin{pmatrix}
U_1\\
\vdots\\
U_m
\end{pmatrix} \in\Z^{2m\times n},
\
V:=
\begin{pmatrix}
V_1\\
\vdots\\
V_m
\end{pmatrix}\in\Z^{2m\times n}.
\]
For every $x\in \R^n$, we write
$u(x)=u^0+Ux$ and $v(x)=v^0+Vx$.
Let $D,c$ be defined as in~\eqref{eq:stacks} and $N=4m$. Let
\[
A=\begin{pmatrix}0&-D-u^0c^{\T}\\I_{2m}&-v^0c^{\T}\end{pmatrix} \in \Z^{N\times N},
 \qquad B=-\binom UV\in \Z^{N\times n},\qquad C=(0_{1\times2m},c^{\T})\in\Z^{1\times N}.
\]
We claim that $\Phi$ is satisfiable if and only if there exists $x\in\R^n$ such that $A + BxC$ is Hurwitz. By definition of $A,B$, and $C$, we have
\[
A+BxC=\begin{pmatrix}0&-D-u(x)c^{\T}\\I_{2m}&-v(x)c^{\T}\end{pmatrix}.
\]
By Lemma~\ref{lem:completion}, for every $x\in\R^n$ there is a symmetric matrix $X(x)\in\R^{2m\times 2m}$ such that
\[
X(x)c = v(x),\ \  DX(x) - X(x)D^\T = v(x)u(x)^\T - u(x)v(x)^\T.
\] 
Writing $X(x)=(X_{ij}(x))_{i,j=1}^m$ in
$2\times2$ blocks, we have
\((X_{ii}(x))_{11}=(G_i(x))_{11}\) for all $i\in [m]$.


Suppose $\Phi$ has a satisfying Boolean assignment $x\in \{\pm1\}^n$. By Lemma~\ref{lem:property-encoding}(i), $$\max_{1\le i\le m}\{\norm{u_i({x})},\norm{v_i({x})}\}\le9.$$ Together with Lemma~\ref{lem:completion}, we have
\begin{equation}\label{eq:step-2-diagonal-bound}
    \norm{X({x})-\diag(G_1({x}),\ldots,G_m({x}))}
 \le\frac{12m\cdot9^2}{4096m}
 =\frac{243}{1024}<\frac14.
\end{equation}
Also from Lemma~\ref{lem:property-encoding}(i), we know that $G_i({x})\succeq \frac{1}{3}I_2$ for all $i\in [m]$. Combining with~\eqref{eq:step-2-diagonal-bound} gives
\( X({x})\succ\tfrac1{12}I_{2m}\).
Since $\norm{u({x})}\le9\sqrt m$ and $\norm{c}=\sqrt m$, we have
\[
 \sym(D+u({x})c^{\T})
 \succeq(h-1-\norm{u({x})}\norm{c})I_{2m}
 \succeq(h-1-9m)I_{2m}\succ0.\]
Then applying Lemma~\ref{lem:certificate}(ii) gives that $A+B{x}C$ is Hurwitz.

Conversely, suppose there is $x\in\R^n$ such that $A + BxC$ is Hurwitz. By Lemma~\ref{lem:certificate} (i), $X(x)$ is positive semidefinite. By Lemma~\ref{lem:completion}, for $i\in [m]$, \(
 (G_i(x))_{11}=(X_{ii}(x))_{11}\ge0.
\) Then applying Lemma~\ref{lem:property-encoding}(ii) shows that $\Phi$ is satisfiable. 

The reduction uses only polynomially bounded integers.  We have
$N=4m$, $h=4096m$, and
\[
\lambda_i=ih\le \lambda_m=4096m^2=256N^2.
\]
It is not difficult to verify that every entry of $A$ has absolute value at most $256N^2$, every entry of $B$ has absolute value at most $6$, and $C$ is binary.  Therefore, the reduction can be carried out in polynomial time and establishes strong NP-hardness.

\end{proof}

\section{Discrete-time hardness}\label{sec:dt}

In this section, we establish the discrete-time analogue of Theorem \ref{thm:main}:

\begin{theorem}\label{thm:dt-hardness}
Given matrices
$A_d\in\Q^{N\times N}$, $B_d\in\Q^{N\times p}$, and $C_d\in\Q^{1\times N}$, deciding if there exists a matrix $K_d\in\R^{p\times1}$ that makes $A_d+B_dK_dC_d$ Schur is
strongly NP-hard.
\end{theorem}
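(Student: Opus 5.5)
The plan is to reduce from the continuous-time instances of Theorem~\ref{thm:main} using the Cayley transform. For a real matrix $M$ with $1$ not an eigenvalue, set $\phi(M)=(I+M)(I-M)^{-1}$. Then $\mu\mapsto(1+\mu)/(1-\mu)$ maps the open left half-plane bijectively onto the open unit disk, so $\phi(M)$ is Schur if and only if $M$ is Hurwitz.

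The obstacle is that $\phi(A+BKC)$ is not affine in $K$. It must be written in the form $A_d+B_dK_dC_d$, with a bijective correspondence between $K$ and $K_d$.

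I would use the rank-one structure of the feedback (single output). Let $P=I-A$. One cannot assume $1$ is not an eigenvalue of $A+BKC$ for every $K$, so first rescale: replace $A,B$ by $A/s,B/s$ with $s$ a large integer. This preserves Hurwitzness and strong hardness, since entries become rationals with polynomially bounded numerators and denominators. The point of rescaling is that $A$ itself is not assumed Hurwitz; $A/s$ has spectrum inside the disk of radius $<1/2$, so $P$ is invertible. It does not make $I-A-BKC$ invertible for all $K$.

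Write $W=BK\in\R^{N}$, with $p$ inputs and one output. By Sherman--Morrison,
\[
(P-WC)^{-1}=P^{-1}+\frac{P^{-1}WCP^{-1}}{1-CP^{-1}W}.
\]
Consequently $\phi(A+WC)=-I+2(P-WC)^{-1}$ has the form
\[
\phi(A+WC)=(-I+2P^{-1})+P^{-1}\tilde W\,CP^{-1}, \qquad \tilde W=\frac{2W}{1-CP^{-1}W}.
\]
I therefore take $A_d=-I+2P^{-1}$, $B_d=P^{-1}B$, and $C_d=CP^{-1}$. The new gain is $K_d=2K/(1-CP^{-1}BK)$.

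The correspondence between $K$ and $K_d$ is a projective change of variables. Let $g=CP^{-1}B$, a row vector. The inverse map is $K=K_d/(2+gK_d)$, valid whenever $2+gK_d\neq0$. It remains to handle the degenerate gains.
\begin{itemize}
\item[(a)] If $1=gK$, then $1$ is an eigenvalue of $A+BKC$, since $\det(P-BKC)=\det P\,(1-gK)$. Such a $K$ is not Hurwitz-stabilizing, so it can be discarded.
\item[(b)] If $2+gK_d=0$, then $\det(I+A_d+B_dK_dC_d)$, computed by the same determinant lemma applied to $2P^{-1}+P^{-1}BK_dCP^{-1}$, gives a factor $2+gK_d=0$. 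So $-1$ is an eigenvalue and $K_d$ is not Schur-stabilizing.
\end{itemize}
Thus stabilizing $K$ and stabilizing $K_d$ correspond bijectively, and the two decision problems are equivalent.

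The main obstacle is strong hardness. $P^{-1}$ generally has entries with exponentially large encodings, which would only give weak NP-hardness. I expect to exploit the specific structure of $A$ from Step~2: it is block-sparse, and $D$ is block diagonal with $2\times2$ Jordan-type blocks.

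The first thing I would try is to avoid inversion altogether. Instead of the Cayley transform, use the exponential-free, first-order map $M\mapsto I+\tau M$ with $\tau=1/s$ small. Eigenvalues $\mu$ of $M$ become $1+\tau\mu$, which lie in the unit disk if and only if $|1+\tau\mu|<1$. This is equivalent to Hurwitzness only when the eigenvalues are bounded. The loss is that $K$ is unbounded, so large gains could send eigenvalues far away.

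I would handle this by showing, in the reduction of Theorem~\ref{thm:main}, that any Hurwitz-stabilizing $x$ forces $(G_i(x))_{11}\ge0$, while in the satisfiable direction $x\in\{\pm1\}^n$ keeps the spectrum within a disk of polynomial radius $R$. The converse direction needs a discrete analogue of Lemma~\ref{lem:certificate}(i): Schurness of $I+\tau\M$ implies $X\succeq0$. I would prove this via a discrete Lyapunov identity for $H$, absorbing the extra $\tau^2\M H\M^{\T}$ term. If that absorption fails, I would fall back to the Cayley construction and bound the denominators of $P^{-1}$ using the block-triangular structure of $I-\M$.
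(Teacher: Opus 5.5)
\textbf{There is a genuine gap: the strong-hardness part is not established.} Your Cayley--Sherman--Morrison construction is correct, including the treatment of the degenerate gains (a) and (b). It is essentially the paper's Lemma~\ref{lem:cayley} with $\beta=1$. But, as you say yourself, it only yields weak NP-hardness, and the actual content of the theorem is the magnitude bound, which your proposal does not establish.

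Your primary route, the Euler map $I+\tau\M$, puts the difficulty in the wrong direction. The converse direction is free: $\abs{1+\tau\mu}<1$ forces $\Ree\mu<0$, so Schurness of $I+\tau(A+BxC)$ already makes $A+BxC$ Hurwitz, and the proof of Theorem~\ref{thm:main} applies. No discrete analogue of Lemma~\ref{lem:certificate}(i) is needed, and large gains are harmless.

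The real obstruction is the satisfiable direction. Since $\abs{1+\tau\mu}<1\iff\tau\abs{\mu}^2<-2\Ree\mu$, a polynomial bound $\abs{\mu}\le R$ does not suffice. You need a \emph{uniform, inverse-polynomial stability margin} $-\Ree\mu\ge\gamma$ for $A+BxC$ at every satisfying $x\in\{\pm1\}^n$, so that a single $\tau$ of polynomial size works. Nothing in your plan or in the paper's lemmas provides this. Lemma~\ref{lem:certificate}(ii) is purely qualitative: its certificate satisfies $\M H+H\M^\T=-2(H\calC^\T)(H\calC^\T)^\T$, which has rank one. Hence $(I+\tau\M)H(I+\tau\M)^\T-H=-2\tau(H\calC^\T)(H\calC^\T)^\T+\tau^2\M H\M^\T$ is positive on every nonzero vector orthogonal to $H\calC^\T$ (as $H\succ0$ and $\M$ is nonsingular), so the $\tau^2$ term can never be absorbed. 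The closed loop has oscillatory modes of frequency about $\sqrt{\lambda_k}$ whose real parts, already for a single block, are only of order $1/\lambda_k$. Turning this into a rigorous bound for the coupled system would be a genuinely new argument.

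Your fallback is the route the paper takes, but ``the block-triangular structure of $I-\M$'' does not control magnitudes with the parameters $\lambda_k=kh$ of Theorem~\ref{thm:main}. Write $A=A_0-b_0C$ with $A_0=\left(\begin{smallmatrix}0&-D\\ I_{2m}&0\end{smallmatrix}\right)$, $b_0=\binom{u^0}{v^0}$, $u^0_k=(\epsilon_k,0)^\T$, $v^0_k=(0,\eta_k)^\T$, and $R_0=(I-A_0)^{-1}$. Then $(I_2+D_k)^{-1}=a_k^{-1}I_2+a_k^{-2}J$ with $a_k=1+kh$. The Sherman--Morrison scalar $1+CR_0b_0=1+\sum_k(\epsilon_k+\eta_ka_k)/a_k^2$ is a fraction over $\operatorname{lcm}_k a_k^2$, which is super-polynomially large. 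Dividing by it destroys polynomial magnitude in $(I-A)^{-1}$, hence in $A_d,B_d,C_d$. Your rescaling by $s$ merely replaces $a_k$ by $s^2+\lambda_k$ and does not help.

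The missing idea is to \emph{re-choose the spectral parameters}. This is allowed because Step~2 only uses $\lambda_k\ge h$ and $\abs{\lambda_k-\lambda_l}\ge h$. Take $\sigma=hm(m+1)$ (renamed to avoid a clash with your step size) and $\lambda_k=\sigma/k-1$. Then $(I_2+D_k)^{-1}=\frac{k}{\sigma}I_2+\frac{k^2}{\sigma^2}J$, so all blocks share the denominator $\sigma^2$. Consequently:
\begin{itemize}
\item $R_0=T/\sigma^2$ with $T$ integral and $\abs{T_{ab}}\le\sigma^2$;
\item the Sherman--Morrison scalar equals $q/\sigma^2$ for a positive integer $q\le(1+3N^2)\sigma^2$;
\item $(I-A)^{-1}=P/\Delta$ with $P$ integral and $\Delta=\sigma^2q$, all of magnitude $N^{O(1)}$.
\end{itemize}
Without this parameter choice, or a completed margin estimate for the Euler route, your argument proves only weak NP-hardness.
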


The key idea of the proof (modulo an important technical problem that we shall address) is to utilize the Cayley transform to transfer the reduction from the continuous-time case. The Cayley transform is a well-studied tool for passing between continuous and discrete time systems (see, for example, \cite{Staffans}). 

\begin{lemma}[Cayley transform]\label{lem:cayley}
Let $A\in\Q^{N\times N}$, $B\in\Q^{N\times p}$, and
$C\in\Q^{1\times N}$. Choose $\beta\in\Q$, $\beta>0$, such that
$\beta I-A$ is invertible, and denote
\begin{equation}\label{eq:cayley-data}
R_\beta=(\beta I-A)^{-1},\qquad \widehat{D}=CR_\beta B, \qquad
A_d=2\beta R_\beta-I,\qquad B_d=R_\beta B,\qquad C_d=2\beta CR_\beta.
\end{equation}
Let $\mathcal{K}_c$ denote the set of all stabilizing controllers of the system $(A,B,C)$ in continuous time and $\mathcal{K}_d$ the set of all stabilizing controllers of the system $(A_d, B_d, C_d)$ in discrete time. Then, the maps
\begin{align}\label{eq:cayley-gains}
\Psi_{c\to d}:\mathcal K_c&\longrightarrow\mathcal K_d, 
\qquad
\Psi_{c\to d}(K_c)=\frac{K_c}{1-\widehat{D}K_c},\\
\Psi_{d\to c}:\mathcal K_d&\longrightarrow\mathcal K_c,
\qquad
\Psi_{d\to c}(K_d)=\frac{K_d}{1+\widehat{D}K_d},
\end{align}
are inverses and provide a bijection between the rational stabilizing
controllers.
\end{lemma}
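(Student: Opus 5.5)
The plan is to reduce everything to one algebraic identity. For any gain $K\in\R^{p\times1}$, the matrix $A+BKC$ and the closed-loop matrix of the discrete-time system should be related by the scalar Cayley map
\[
s\mapsto\frac{\beta+s}{\beta-s}.
\]
This map sends the open left half-plane bijectively onto the open unit disk, sends $\beta$ to $\infty$, and is never equal to $-1$. Throughout, $\widehat D K$ is a scalar, because $\widehat D=CR_\beta B\in\Q^{1\times p}$ and $K\in\R^{p\times1}$. This is what makes the formulas in~\eqref{eq:cayley-gains} well-defined divisions whenever their denominators are nonzero.

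\emph{Step 1 (continuous to discrete).} Let $K_c\in\mathcal K_c$ and write $A_K=A+BK_cC$. Since $\beta I-A_K=(\beta I-A)-BK_cC$ is a rank-one update, the matrix determinant lemma gives
\[
\det(\beta I-A_K)=\det(\beta I-A)\,(1-\widehat D K_c).
\]
Because $A_K$ is Hurwitz and $\beta>0$, $\beta$ is not an eigenvalue of $A_K$, so the left side is nonzero and hence $1-\widehat DK_c\neq0$. The Sherman--Morrison formula then gives
\[
(\beta I-A_K)^{-1}=R_\beta+\frac{R_\beta BK_cCR_\beta}{1-\widehat DK_c}.
\]
Multiplying by $2\beta$ and subtracting $I$, I expect to obtain exactly $A_d+B_dK_dC_d$ with $K_d=\Psi_{c\to d}(K_c)$. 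This is just a matter of matching the factors $2\beta$, $R_\beta B$ and $CR_\beta$ against~\eqref{eq:cayley-data}. The eigenvalues of $2\beta(\beta I-A_K)^{-1}-I$ are $(\beta+s)/(\beta-s)$, where $s$ ranges over the eigenvalues of $A_K$. These lie in the open unit disk exactly when $\Ree(s)<0$, so $K_d\in\mathcal K_d$.

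\emph{Step 2 (discrete to continuous).} Let $K_d\in\mathcal K_d$. The first thing to show is $1+\widehat DK_d\neq0$, which I expect to be the only nontrivial point. I would look at the eigenvalue $-1$:
\[
A_d+B_dK_dC_d+I=2\beta R_\beta\,(I+BK_dCR_\beta),
\]
and by the determinant lemma $\det(I+BK_dCR_\beta)=1+CR_\beta BK_d=1+\widehat DK_d$. If this vanished, $-1$ would be an eigenvalue of a Schur matrix, which is impossible.

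Now set $K_c=\Psi_{d\to c}(K_d)$. A direct scalar computation gives
\[
1-\widehat DK_c=\frac{1}{1+\widehat DK_d}\neq0
\qquad\text{and}\qquad
\frac{K_c}{1-\widehat DK_c}=K_d.
\]
So the identity of Step 1 applies to $K_c$ (it only needed $1-\widehat DK_c\neq0$) and shows that $A_d+B_dK_dC_d$ is the Cayley transform of $A+BK_cC$. Here $\beta$ is not an eigenvalue of $A+BK_cC$, since $\det(\beta I-A-BK_cC)=\det(\beta I-A)(1-\widehat DK_c)\neq0$. The map $s\mapsto(\beta+s)/(\beta-s)$ is a bijection from $\C\setminus\{\beta\}$ onto $\C\setminus\{-1\}$, and points of the unit disk pull back to points of the open left half-plane. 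Hence every eigenvalue of $A+BK_cC$ has negative real part, and $K_c\in\mathcal K_c$.

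\emph{Step 3.} The same scalar manipulations, applied in the other direction, give $\Psi_{d\to c}(\Psi_{c\to d}(K_c))=K_c$. Together with Step 2 this shows the two maps are mutually inverse between $\mathcal K_c$ and $\mathcal K_d$. Finally, $\widehat D$ is rational because $A,B,C$ and $\beta$ are rational, so both maps send rational gains to rational gains. This gives the claimed bijection between the rational stabilizing controllers.
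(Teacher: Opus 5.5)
Your proposal is correct and follows essentially the same route as the paper: the matrix determinant lemma gives $1\mp\widehat DK\neq0$ at stabilizing gains, Sherman--Morrison identifies $A_d+B_dK_dC_d$ with $2\beta(\beta I-A-BK_cC)^{-1}-I$, and the eigenvalue map $s\mapsto(\beta+s)/(\beta-s)$ converts Hurwitz to Schur. Your Step 2 is slightly more explicit than the paper, since it checks via the scalar identity $1-\widehat DK_c=1/(1+\widehat DK_d)$ that $1-\widehat DK_c\neq0$ for $K_c=\Psi_{d\to c}(K_d)$ before invoking the Sherman--Morrison identity.
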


\begin{proof}
Let $F_c=A+BK_cC$ and $F_d=A_d+B_dK_dC_d$. Since
$R_\beta=(\beta I-A)^{-1}$, the matrix determinant lemma gives that
\begin{align*}
\det(\beta I-F_c)
&=\det(\beta I-A-BK_cC)
 =\det(\beta I-A)\bigl(1-CR_\beta BK_c\bigr) \\
 &=\det(\beta I-A)(1-\widehat{D}K_c),\\
\det(I+F_d)
&=\det\!\bigl(2\beta R_\beta+2\beta R_\beta BK_dCR_\beta\bigr)
 =(2\beta)^N\det(R_\beta)\det(I+BK_dCR_\beta) \\
 &=(2\beta)^N\det(R_\beta)(1+\widehat{D}K_d).
\label{eq:dt-exception}
\end{align*}

If $K_c$ is stabilizing, then $F_c$ must be Hurwitz and thus $\beta$ is not an eigenvalue of $F_c$. Therefore, $1-\widehat{D}K_c\ne0$. Similarly, if $K_d$ is stabilizing, then $F_d$ must be Schur, and thus $-1$ is not an eigenvalue of $F_d$, implying that $1+\widehat{D}K_d\ne0$. It follows that $\Psi_{c\to d}$ is well-defined for every
$K_c\in\mathcal K_c$, and that $\Psi_{d\to c}$ is well-defined
for every $K_d\in\mathcal K_d$. We next show that these mappings
preserve stability. 

We begin by applying the Sherman--Morrison formula to
$\beta I-F_c=(\beta I-A)-BK_cC$. This gives us
\begin{equation}\label{eq:rank-one-cayley}
(\beta I-F_c)^{-1}
=R_\beta+\frac{R_\beta BK_cCR_\beta}{1-\widehat{D}K_c}.
\end{equation}
Using \eqref{eq:cayley-data}, \eqref{eq:cayley-gains}, and \eqref{eq:rank-one-cayley}, we have
\begin{align}
F_d
&=2\beta R_\beta-I+2\beta R_\beta BK_dCR_\beta\notag\\
&=2\beta\left(R_\beta+\frac{R_\beta BK_cCR_\beta}{1-\widehat{D}K_c}\right)-I\notag\\
&=2\beta(\beta I-F_c)^{-1}-I
=(\beta I-F_c)^{-1}(\beta I+F_c).
\label{eq:closed-loop-cayley}
\end{align}
Now, let $s$ be an eigenvalue of $F_c$. Equation~\eqref{eq:closed-loop-cayley}
shows that the corresponding eigenvalue of $F_d$ is $z=\frac{\beta+s}{\beta-s}.$
Conversely, every eigenvalue of $F_d$ is obtained from an eigenvalue of
$F_c$ through this transformation. Since
\[
\abs{\beta+s}^2-\abs{\beta-s}^2
=4\beta\Ree s,
\]
and $\beta>0$, we have
\[
\abs z<1
\quad\Longleftrightarrow\quad
\abs{\beta+s}<\abs{\beta-s}
\quad\Longleftrightarrow\quad
\Ree s<0.
\]
Therefore, $F_c$ is Hurwitz if and only if $F_d$ is Schur. The fact that $\Psi_{c \to d}$ and $\Psi_{d \to c}$ are mutual inverses follows from a direct computation:
\begin{align*}
\Psi_{d\to c}\!\left(\Psi_{c\to d}(K_c)\right)
&=
\frac{\dfrac{K_c}{1-\widehat{D}K_c}}
     {1+\widehat{D}\dfrac{K_c}{1-\widehat{D}K_c}}
=
\frac{K_c}{1-\widehat{D}K_c+\widehat{D}K_c}
=
K_c,\\
\Psi_{c\to d}\!\left(\Psi_{d\to c}(K_d)\right)
&=
\frac{\dfrac{K_d}{1+\widehat{D}K_d}}
     {1-\widehat{D}\dfrac{K_d}{1+\widehat{D}K_d}}
=
\frac{K_d}{1+\widehat{D}K_d-\widehat{D}K_d}
=
K_d.
\end{align*}
Finally, since $\beta I-A$ is an invertible rational matrix,
$R_\beta=(\beta I-A)^{-1}$ and $\widehat{D}=CR_\beta B$ are rational. Hence
\eqref{eq:cayley-gains} shows that both transformations
preserve rationality.
\end{proof}

Since the inverse of a rational matrix can be computed in polynomial time, a direct application of Lemma~\ref{lem:cayley} gives
\(\mathrm{NP}\)-hardness of output feedback stabilization in discrete time. For strong
\(\mathrm{NP}\)-hardness, however, this is not enough: the numerators and denominators of the transformed data must themselves
have polynomial magnitude and this can in general fail under matrix inversion. 

To control these denominators, we modify the parameters $\lambda_i$ used in the continuous-time construction. With the original choice $\lambda_i=ih$, the blocks $(I_2+D_i)^{-1}$ involve denominators $(1+ih)^2$ that vary with $i$. Although each denominator has polynomial magnitude, this alone does not control the denominators arising when the blocks are combined through the Sherman--Morrison formula. We therefore choose parameters for which these blocks share a common denominator of polynomial magnitude.

\begin{proof}[Proof of Theorem \ref{thm:dt-hardness}]
Let $\Phi$ be a $\SAT$ formula with $n\ge1$ variables and $r$
clauses, and set $m=n+r$ and $N=4m$. Let $u_i(x)$ and $v_i(x)$
be the vectors defined in~\eqref{eq:def-u-v-g-1}
and~\eqref{eq:def-u-v-g-2}. Collect these vectors into
\[
u(x)=
\begin{pmatrix}u_1(x)\\ \vdots\\ u_m(x)\end{pmatrix}
=u^0+Ux,
\qquad
v(x)=
\begin{pmatrix}v_1(x)\\ \vdots\\ v_m(x)\end{pmatrix}
=v^0+Vx.
\]
Here $u^0,v^0\in\Z^{2m}$ contain the constant terms, and
$U,V\in\Z^{2m\times n}$ contain the coefficients of $x$.
By~\eqref{eq:def-u-v-g-1} and~\eqref{eq:def-u-v-g-2},
every entry of $u^0,v^0$ has absolute value at most $3$,
and every entry of $U,V$ has absolute value at most $6$.

We modify the choice of parameters by setting
\begin{equation}\label{eq:dt-separation}
h=4096m,\qquad
\tau=hm(m+1),\qquad
\lambda_i=\frac{\tau}{i}-1
\quad(1\le i\le m).
\end{equation}
As in \eqref{eq:stacks}, define
\[
D_i=\lambda_iI_2-J,\qquad
D=\diag(D_1,\ldots,D_m),\qquad
c=(e^\T,\ldots,e^\T)^\T,
\]
and
\begin{equation}\label{eq:dt-continuous-instance}
A_0=
\begin{pmatrix}
0&-D\\
I_{2m}&0
\end{pmatrix},
\qquad
b_0=\binom{u^0}{v^0},
\qquad
B=-\binom{U}{V},
\qquad
C=(0_{1\times2m},c^\T),
\qquad
A=A_0-b_0C.
\end{equation}
For every $x\in\R^n$, the matrix $A+BxC$ has the form $\M$
in~\eqref{eq:realization}, with $u=u(x)$ and $v=v(x)$. Note that the parameters in~\eqref{eq:dt-separation} satisfy
\[
\lambda_i\ge\frac{\tau}{m}-1=h(m+1)-1\ge h
\qquad(i\in[m]),
\]
and, for $i\ne j$,
\[
|\lambda_i-\lambda_j|
=\tau\frac{|i-j|}{ij}
\ge\frac{\tau}{m^2}
=h\frac{m+1}{m}
>h.
\]
One can verify that the argument in Step~2 of the proof of
Theorem~\ref{thm:main}, using Lemmas~\ref{lem:certificate},
\ref{lem:completion}, and~\ref{lem:property-encoding}, applies
whenever $\lambda_i\ge h$ and $|\lambda_i-\lambda_j|\ge h$
for $i\ne j$, with $h=4096m$.
Since the parameters in~\eqref{eq:dt-separation} satisfy
these conditions, we obtain
\[
\Phi\text{ is satisfiable}
\quad\Longleftrightarrow\quad
\exists x\in\R^n:\ A+BxC\text{ is Hurwitz}.
\]

\emph{Passing to discrete time.}
We now apply Lemma~\ref{lem:cayley} with $\beta=1$.
First, we verify that $I_N-A$ is invertible. To invert
$I_N-A_0$, define
\[
S_i:=(I_2+D_i)^{-1}\quad(1\le i\le m),
\qquad
S:=\diag(S_1,\ldots,S_m)=(I_{2m}+D)^{-1}.
\]
Since \(1+\lambda_i=\tau/i\) and \(J^2=0\), we have that $S_i
=\left(\frac{\tau}{i}I_2-J\right)^{-1}
=\frac{i}{\tau}I_2+\frac{i^2}{\tau^2}J.$ Moreover,
\[
I_N-A_0=
\begin{pmatrix}
I_{2m}&D\\
-I_{2m}&I_{2m}
\end{pmatrix},
\]
so block inversion gives
\[
R_0:=(I_N-A_0)^{-1}
=
\begin{pmatrix}
S&S-I_{2m}\\
S&S
\end{pmatrix}.
\]
The blocks $u_i^0,v_i^0$ of the constant vectors $u^0,v^0$,
obtained from~\eqref{eq:def-u-v-g-1}
and~\eqref{eq:def-u-v-g-2}, satisfy
\[
u_i^0=\binom{\epsilon_i}{0},\qquad
v_i^0=\binom{0}{\eta_i},\qquad
(\epsilon_i,\eta_i)=
\begin{cases}
(1,1),&1\le i\le n,\\
(-1,3),&n<i\le m.
\end{cases}
\]
The expressions for $R_0$, $C$, and $b_0$ give
\[
CR_0=(c^\T S,c^\T S),\qquad
\tau^2e^\T S_i=(i^2,i\tau).
\]
Consequently,
\[
1+CR_0b_0
=1+\frac{1}{\tau^2}
\sum_{i=1}^m
\bigl(\tau\eta_i i+\epsilon_i i^2\bigr).
\]
For convenience, define
\[
T:=\tau^2R_0,\qquad
q:=\tau^2+\sum_{i=1}^m\bigl(\tau\eta_i i+\epsilon_i i^2\bigr).
\]
Because \(\eta_i\ge1\), \(\epsilon_i\ge-1\), and \(\tau>m\ge i\), we have that $\tau\eta_i i+\epsilon_i i^2
\ge i(\tau-i)>0$ and therefore
\[
1+CR_0b_0=\frac{q}{\tau^2}>0.
\]
Thus the Sherman--Morrison denominator is nonzero. Since
$I_N-A=(I_N-A_0)+b_0C$, the matrix $I_N-A$ is invertible and the
Sherman--Morrison formula gives
\[
R:=(I_N-A)^{-1}
=R_0-\frac{R_0b_0CR_0}{1+CR_0b_0}.
\]
Set
\[
P:=qT-Tb_0CT,\qquad \Delta:=\tau^2q.
\]
Then $R=P/\Delta$. The transformed matrices in
Lemma~\ref{lem:cayley}, with $\beta=1$, are
\[
A_d=2R-I_N=\frac{2P-\Delta I_N}{\Delta},\qquad
B_d=RB=\frac{PB}{\Delta},\qquad
C_d=2CR=\frac{2CP}{\Delta}.
\]
Identifying the continuous-time controller \(K_c\) with \(x\),
Lemma~\ref{lem:cayley} now gives
\[
\begin{aligned}
\Phi\text{ is satisfiable}
&\quad\Longleftrightarrow\quad
\exists K_c\in\R^{n\times1}:
A+BK_cC\text{ is Hurwitz}\\
&\quad\Longleftrightarrow\quad
\exists K_d\in\R^{n\times1}:
A_d+B_dK_dC_d\text{ is Schur}.
\end{aligned}
\]
It remains to show that these matrices can be computed in polynomial
time and that the numerators and denominators of their entries have
polynomial magnitude. We establish these properties using the
explicit expressions above.

We begin by observing that since \(\tau\) and \(J\) are integral, our expression for \(S_i\) gives that $\tau^2S_i=i\tau I_2+i^2J\in\Z^{2\times2}.$
It then follows from the block expression for \(R_0\) that $T\in\Z^{N\times N}.$
Moreover, since \(\tau\), \(\epsilon_i\), and \(\eta_i\) are integers,
$q$ is a positive integer. Thus $P\in\Z^{N\times N}$ and $\Delta\in\Z_{>0}$.
Now, since $\tau=O(m^3),$ and $N = 4m$, we have \(\tau=N^{O(1)}\). The expressions for \(S_i\) and \(R_0\),
together with \(\tau>m\ge i\), give
\[
\abs{T_{ab}}\le\tau^2
\qquad(1\le a,b\le N).
\]
In addition, since every entry of $b_0$ has absolute value at most $3$ and \(C\) is binary, we have the bounds $\abs{(Tb_0)_a}
\le3N\tau^2$, $\abs{(CT)_b}
\le N\tau^2$, and 
\[
0<q\le(1+3N^2)\tau^2.
\]
Consequently, the absolute value of each entry of the integer
matrix $P=qT-Tb_0CT$ is at most $(1+6N^2)\tau^4 = N^{O(1)}$ while the common positive denominator $\Delta$ is at most $(1+3N^2)\tau^4 = N^{O(1)}.$

Finally, every entry of \(PB\) and \(CP\) is a sum of at most \(N\)
products. The entries of \(B\) have absolute value at most \(6\), while
\(C\) is binary. Therefore the integer numerators $2P-\Delta I_N$, $PB$, and $2CP$ of $A_d$, $B_d$, and $C_d$, respectively,
and their common positive denominator $\Delta$ all have magnitude \(N^{O(1)}\).
The displayed expressions can be evaluated using polynomially many
arithmetic operations on integers of polynomially bounded magnitude.
Reducing the resulting fractions to lowest terms also takes
polynomial time and cannot increase these bounds. Hence the reduction
is computable in polynomial time and establishes strong
$\mathrm{NP}$-hardness.
\end{proof}

\section{Representation obstruction to polynomial-time algorithms}
\label{sec:controller-size}

Independently of assumptions such as P$\ne$NP, we show in this section that there is a representation
obstruction to polynomial-time controller synthesis: on some stabilizable
instances, a polynomial-time algorithm cannot even write down a stabilizing
controller (i.e. feedback gain matrix) in its standard representation. This also suggests that output feedback
stabilization may not belong to NP, unless there is a polynomial-size certificate of stabilizability that avoids writing down the controller. The problem belongs to $\mathrm{PSPACE}$, however, since we have already argued that it can be reduced to testing the feasibility of polynomial inequalities.

Observe that if a stabilizing controller exists, a rational one exists as well.
Indeed, the sets of Hurwitz and Schur matrices are open, and
$K\mapsto A+BKC$ is continuous. The set of stabilizing controllers is
therefore open, and every nonempty open subset of a finite-dimensional
real vector space contains a rational point.

We encode an entry $a/b$ of $A,B,$ or $C$, with $a\in\mathbb Z$, $b\in\mathbb
Z_{>0}$, and $\gcd(|a|,b)=1$, by writing its signed numerator and positive
denominator explicitly in binary.
Dense matrix encoding lists every entry,
including zeros, which have constant encoding size. We use the elementary
observation that for $a,b\in\mathbb Z_{>0}$ and $\varepsilon>0$,
\begin{equation}\label{eq:bits-elementary}
 0<\frac ab\le\varepsilon
 \quad\Longrightarrow\quad
 b\ge\varepsilon^{-1}.
\end{equation}
This follows from $a\ge1$ and applies after cancellation, so reducing a
fraction cannot invalidate this denominator bound.

\begin{theorem}[Continuous time]\label{thm:bits-CT}
For each integer $n\ge1$, matrices
$A\in\mathbb Z^{N\times N}$, $B\in\mathbb Z^{N\times p}$, and a nonzero
row vector $C\in\{0,1\}^{1\times N}$ can be constructed in time polynomial in $n$,
with
\[
 N=8n+12,\qquad p=n+2,\qquad
 \max_{i,j}|A_{ij}|\le256N^2,\qquad \max_{i,j}|B_{ij}|\le6,
\]
such that a rational stabilizing controller exists, but every (real)
controller $K=(x_0,\ldots,x_n,t)^{\T}$ for which $A+BKC$ is Hurwitz satisfies
\begin{equation}\label{eq:bits-small-entry}
 0<x_n\le\frac23\,2^{-2^n}.
\end{equation}
In particular, writing $x_n=a/b$ in lowest terms with $b>0$ for any rational
stabilizing controller gives $b\ge\frac32\,2^{2^n}$; this denominator alone
requires more than $2^n$ bits, while the input size is $L_n=\Theta(n^2)$.
\end{theorem}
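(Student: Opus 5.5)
The plan is to reuse the continuous-time machinery of Section~\ref{sec:ct}: the realization $\M$ in~\eqref{eq:realization}, Lemma~\ref{lem:certificate}, and the gluing Lemma~\ref{lem:completion}. The difference is that the \SAT{} blocks of~\eqref{eq:def-u-v-g-1}--\eqref{eq:def-u-v-g-2} are replaced by $m=2n+3$ blocks that encode a chain of repeated squarings in the controller entries $x_0,\dots,x_n$. The key observation is that the $(1,1)$ entry of $\G(a,b)$ is the bilinear form $a_2b_1-a_1b_2$. With $u_i,v_i$ affine in $K$ this gives quadratic inequalities, and by Lemma~\ref{lem:completion} they are preserved exactly in $X(K)$. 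I will therefore pick, for each $k<n$, one block with
\[
u=\begin{pmatrix}1\\ \alpha x_k\end{pmatrix},\qquad v=\begin{pmatrix}x_k\\ \gamma x_{k+1}\end{pmatrix},
\]
so that $(G)_{11}=\alpha x_k^2-\gamma x_{k+1}$. In addition I will use:
\begin{itemize}[leftmargin=*,topsep=3pt,itemsep=2pt]
\item one block forcing $x_0\le c_0$, for a suitable constant $c_0$;
\item one block $u=(-1,0)^\T$, $v=(0,x_n)^\T$, giving $(G)_{11}=x_n$;
\item the remaining blocks as auxiliary blocks involving the slack variable $t$.
\end{itemize}
The integer constants $\alpha,\gamma,c_0$ are chosen so that the chain $x_{k+1}\le(\alpha/\gamma)x_k^2$ started from $x_0\le c_0$ yields $x_n\le\frac23 2^{-2^n}$. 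The block parameters are $\lambda_i=ih$ with $h=4096m$, and $A,B,C$ are assembled exactly as in Step~2 of the proof of Theorem~\ref{thm:main}. Then $N=4m=8n+12$, $p=n+2$, and the entry bounds $256N^2$ and $6$ follow as before.

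\emph{Necessity.} If $A+BKC$ is Hurwitz, Lemma~\ref{lem:certificate}(i) gives $X(K)\succeq0$. By~\eqref{eq:preservation-property} every $(G_i(K))_{11}$ is nonnegative. Chaining these inequalities gives $x_n\le(\alpha/\gamma)^{2^n-1}c_0^{2^n}\le\frac23 2^{-2^n}$, together with $x_n\ge0$. Strict positivity comes from openness: the set of stabilizing $K$ is open, so if some stabilizing $K$ had $x_n=0$, a nearby stabilizing $K'$ would have $x_n'<0$, contradicting $x_n'\ge0$. The denominator claim then follows from~\eqref{eq:bits-elementary}. The input-size estimate $L_n=\Theta(n^2)$ follows from dense encoding of $N\times N$ matrices with entries of $O(\log n)$ bits.

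\emph{Sufficiency.} I will exhibit an explicit rational $K$: for example $x_0=c_0$, each $x_{k+1}$ a fixed fraction of $(\alpha/\gamma)x_k^2$, and a suitable rational $t$. I then verify $X(K)\succ0$ and $\sym(D+u(K)c^\T)\succ0$, so that Lemma~\ref{lem:certificate}(ii) applies. The second condition is immediate from $\lambda_i\ge h$ and $\norm{u}=O(\sqrt m)$, as in Step~2. The first condition would follow from Lemma~\ref{lem:completion} if every $G_i(K)\succeq\delta I_2$ with $\delta>12m\beta^2/h$.

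\emph{Main obstacle.} That uniform margin fails as stated. The entries $\alpha x_k^2-\gamma x_{k+1}$ are doubly-exponentially small at the witness, while $\beta$ is of constant size, so the crude bound~\eqref{eq:completion-bounds} does not suffice. My first attempt is to design the blocks so that the tiny quantities never sit on a diagonal whose margin matters. I would use the extra variable $t$ and the auxiliary blocks to add a constant to each $(1,1)$ entry that is cancelled only in the necessity direction. An alternative is to rescale each block so that its $(1,1)$ entry is $\Theta(1)$ at the witness, for instance by encoding $y_{k+1}\le y_k^2$ in the variables $y_k=x_k/x_{k-1}^2$-type ratios. If neither works, I would refine the gluing estimate. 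Off-diagonal blocks $X_{ij}$ solve~\eqref{eq:sylvester} with right-hand sides bilinear in $(u_i,v_j)$. By choosing the $u$'s and $v$'s so that the cross terms are themselves proportional to the small quantities, $\norm{X_{ij}}$ becomes comparable to those quantities divided by $h$. A Schur-complement argument, block by block, would then give $X\succ0$. Getting this margin bookkeeping right is where I expect most of the work to lie.
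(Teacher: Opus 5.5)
Your necessity argument is close to the paper's, and deriving $x_n>0$ from openness of the stabilizing set is a valid, simpler substitute for the paper's decoupled-oscillator argument. The sufficiency direction, however, has a genuine gap, and it is not a small-margin problem. For your chain block $u=(1,\alpha x_k)^\T$, $v=(x_k,\gamma x_{k+1})^\T$,
\[
G_i=\begin{pmatrix}\alpha x_k^2-\gamma x_{k+1}&x_k\\ x_k&\gamma x_{k+1}\end{pmatrix},\qquad
\det G_i=(\alpha x_k^2-\gamma x_{k+1})\gamma x_{k+1}-x_k^2\le\frac{\alpha^2x_k^4}{4}-x_k^2 .
\]
This is negative whenever $0<|x_k|<2/|\alpha|$, which is exactly the regime your chain is built to force. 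So these blocks are indefinite at every candidate witness. Congruence preserves inertia, so no argument of the form ``$X$ is a small perturbation of (a congruence of) $G_0$'' can give $X\succ0$. Your block-Schur-complement remedy with small cross terms also fails, since it leaves $X_{ii}\approx G_i$ indefinite.

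Tiny $(1,1)$ entries are not the real obstruction: in the paper they are also doubly exponentially small at the witness. So your first two remedies aim at the wrong target. They are also not viable as stated. A constant added to $(G_i)_{11}$ is present at the witness and in the necessity argument alike, so it can only weaken the chain. And $u,v$ must be affine in $K$, which rules out ratio variables like $x_k/x_{k-1}^2$.

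Your block $v=(0,x_n)^\T$ has a related defect. Here $G=x_nI_2$ has a doubly exponentially small $(2,2)$ entry, and a rescaling $\diag(d_i,1)$ cannot enlarge it. That is the form of rescaling needed to keep $S_ie=e$ and hence the gluing structure.

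There is also a smaller gap in necessity. Chaining $x_{k+1}\le(\alpha/\gamma)x_k^2$ from $x_0\le c_0$ needs $x_k\ge0$ for every $k$. Otherwise a very negative $x_0$ leaves $x_1$ unconstrained.

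The missing idea is to place a $\Theta(1)$ slack variable $t$ in the second slot of every $v_i$, making all constraints homogeneous in $(x,t)$. The paper uses four kinds of blocks:
\begin{enumerate}[label=\textup{(\alph*)},leftmargin=*,topsep=3pt,itemsep=2pt]
\item $u=(-x_j,0)^\T$, $v=(0,t)^\T$ for $0\le j\le n$, giving $tx_j\ge0$; this also supplies the missing nonnegativity.
\item $u=(x_{j+1},3x_j)^\T$, $v=(x_j,t)^\T$, giving $3x_j^2-tx_{j+1}\ge0$.
\item One block giving $t(t-6x_0)\ge0$.
\item One block giving $t(2-t)\ge0$.
\end{enumerate}
The identity $\operatorname{tr}(A+BKC)=-\sum_i(v_i)_2=-mt$ then forces $t>0$.

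Take the witness $t=1$, $x_j=12^{-2^j}$. Every block factors as $G_i=S_i\widehat G_iS_i$ with $S_i=\diag(d_i,1)$, $0<d_i\le1$, and $\widehat G_i\in\{I_2,F\}$, where $F=\left(\begin{smallmatrix}2&1\\1&1\end{smallmatrix}\right)\succeq\frac13I_2$. Moreover $\norm{S_i^{-1}u_i},\norm{S_i^{-1}v_i}\le2$.

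One then reruns the gluing estimate for $\widehat X=S^{-1}XS^{-1}$. The identity $S_i^{-1}JS_i=d_iJ$ with $d_i\le1$ preserves the Sylvester bound $\norm{\widehat X_{ij}}\le16/h$. Since $S_ie=e$, we have $S_i^{-1}\Gamma(S_iz)S_i^{-1}=\Gamma(z)$, so the diagonal correction keeps its form. This gives $\norm{\widehat X-\widehat G_0}\le3/256<\frac13$.

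Your third remedy points in this direction. But it only works once every $G_i$ is uniformly positive definite after such a congruence. That is exactly what putting $t$ in $(v_i)_2$, rather than $\gamma x_{k+1}$ or $x_n$, achieves.
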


\begin{proof}
First, we describe the construction.
We will use the definition of $J,e,$ and $\G$ from Section~\ref{sec:ct}.
Set
$m=2n+3$, $h=4096m$, and, for $i=1,\dots,m$, set
$\lambda_i=ih$ and $D_i=\lambda_iI_2-J$. Let
\[
 D=\diag(D_1,\ldots,D_m)\in\mathbb R^{2m\times2m},\qquad
 c=\begin{pmatrix}e\\\vdots\\e\end{pmatrix}\in\mathbb R^{2m}.
\]
For $K=(x_0,\ldots,x_n,t)^{\T}\in\mathbb R^{p}$, define
$u_i(K),v_i(K)\in\mathbb R^2$ by
\begin{equation}\label{eq:bits-vectors}
\begin{aligned}
 u_{j+1}&=\binom{-x_j}{0},&v_{j+1}&=\binom{0}{t}
       &&(0\le j\le n),\\
 u_{n+2+j}&=\binom{x_{j+1}}{3x_j},&v_{n+2+j}&=\binom{x_j}{t}
       &&(0\le j<n),\\
 u_{m-1}&=\binom{6x_0-t}{0},&v_{m-1}&=\binom{0}{t},\\
 u_m&=\binom{t-2}{0},&v_m&=\binom{0}{t}.
\end{aligned}
\end{equation}
Stack these vectors vertically as $u=u^0+UK$ and $v=VK$, with
$u^0\in\mathbb Z^{2m}$ and $U,V\in\mathbb Z^{2m\times p}$. The only nonzero
entry of $u^0$ is its $(2m-1)$st entry, equal to $-2$. Define
\begin{equation}\label{eq:bits-plant}
 A=\begin{pmatrix}0&-D-u^0c^{\T}\\I_{2m}&0\end{pmatrix},\qquad
 B=-\binom UV,\qquad C=(0_{1\times2m},c^{\T}).
\end{equation}
Thus $M=A+BKC$ is the matrix $\M$ of Lemma~\ref{lem:certificate}.
For each controller, let $G_i=\G(u_i,v_i)$ and
$G_0=\diag(G_1,\ldots,G_m)$. Lemma~\ref{lem:completion} gives a symmetric
$X\in\mathbb R^{2m\times2m}$ satisfying
\begin{equation}\label{eq:bits-completion}
 Xc=v,\qquad DX-XD^{\T}=vu^{\T}-uv^{\T},\qquad
 (X_{ii})_{11}=(G_i)_{11}\quad(1\le i\le m).
\end{equation}

Second, we show that any stabilizing controller must satisfy \eqref{eq:bits-small-entry}.
Suppose $M$ is Hurwitz. Since $\operatorname{tr}M=-mt$, we have $t>0$, and by 
Lemma~\ref{lem:certificate}\textup{(i)}, $X\succeq0$. 
Thus $X$ has nonnegative diagonal entries.
From~\eqref{eq:bits-completion}, we then have that $(G_i)_{11}=(X_{ii})_{11}\geq 0$ for $i=1,\dots,m$.
Substituting the vectors in
\eqref{eq:bits-vectors} gives
\[
 tx_j\ge0\quad(0\le j\le n),\qquad
 3x_j^2-tx_{j+1}\ge0\quad(0\le j<n),
 \qquad t(t-6x_0)\ge0,\qquad t(2-t)\ge0.
\]
Since $t>0$, these inequalities yield:
\begin{equation}\label{eq:bits-chain}
 \begin{gathered}
 x_j\ge0\quad(0\le j\le n),\qquad x_0\le t/6,\qquad 0<t\le2,\\
 tx_{j+1}\le3x_j^2\quad(0\le j<n).
 \end{gathered}
\end{equation}

Moreover, we claim that $x_n>0$. Otherwise, the first coordinates of $u_{n+1}$ and
$v_{n+1}$ vanish. In the state partition
$(\xi,\eta)\in\mathbb R^{2m}\times\mathbb R^{2m}$, the coordinates with
index $q=2n+1$ then form the autonomous subsystem
\[
 \dot\xi_q=-\lambda_{n+1}\eta_q,\qquad \dot\eta_q=\xi_q.
\]
Its positive-definite energy $\xi_q^2+\lambda_{n+1}\eta_q^2$ is conserved,
contradicting convergence of every trajectory to zero.

Set $z_j=3x_j/t$ for $j=0,\dots,n$. Equations~\eqref{eq:bits-chain} give
$0\le z_0\le1/2$ and $0\le z_{j+1}\le z_j^2$. Induction therefore gives
\begin{equation}\label{eq:bits-normalized-chain}
 0\le\frac{x_j}{t}\le\frac13\,2^{-2^j}\le\frac16
 \quad(0\le j\le n).
\end{equation}
Together with $x_n>0$ and $t\le2$, this proves~\eqref{eq:bits-small-entry}.
The denominator bound follows from~\eqref{eq:bits-elementary}.

Third, we show the existence of a rational stabilizing controller.
Choose
\begin{equation}\label{eq:bits-witness}
 t=1,\qquad x_j=12^{-2^j}\quad(0\le j\le n).
\end{equation}
To prove that $X\succ0$, set $S_i=\diag(d_i,1)$ and
$S=\diag(S_1,\ldots,S_m)$, where
\begin{equation}\label{eq:bits-scaling}
 \begin{gathered}
 d_{j+1}=\sqrt{x_j}\quad(0\le j\le n),\qquad
 d_{n+2+j}=x_j\quad(0\le j<n),\\
 d_{m-1}=1/\sqrt2,\qquad d_m=1.
 \end{gathered}
\end{equation}
Every $d_i$ lies in $(0,1]$. Direct substitution shows that the matrices
$\widehat G_i=S_i^{-1}G_iS_i^{-1}$ are either $I_2$ or
$F=\left(\begin{smallmatrix}2&1\\1&1\end{smallmatrix}\right)$.
Since $F \succeq \frac{1}{3}I_2$ and by the definitions of $S_i,u_i,v_i$, we have
\begin{equation}\label{eq:bits-scaled-bounds}
 \widehat G_0:=S^{-1}G_0S^{-1}\succeq\tfrac13I_{2m},\qquad
 \widehat\beta:=\max_{1\le i\le m}
 \{\norm{S_i^{-1}u_i},\norm{S_i^{-1}v_i}\}\le2.
\end{equation}

Write $\widehat X=S^{-1}XS^{-1}$,
$\widehat u_i=S_i^{-1}u_i$, and $\widehat v_i=S_i^{-1}v_i$.
Since $S_i^{-1}JS_i=d_iJ$, the off-diagonal block equations are
\[
 (\lambda_i-\lambda_j)\widehat X_{ij}
 -d_iJ\widehat X_{ij}+d_j\widehat X_{ij}J^{\T}
 =\widehat v_i\widehat u_j^{\T}-\widehat u_i\widehat v_j^{\T}
 \quad(i\ne j).
\]
As $d_i,d_j\le1$ and $|\lambda_i-\lambda_j|\ge h\ge4$, the triangle inequality
gives $\norm{\widehat X_{ij}}\le4\widehat\beta^2/h$.
The diagonal correction also retains its form:
\[
 \widehat X_{ii}-\widehat G_i
 =-\Gamma\!\left(\sum_{j\ne i}\widehat X_{ij}e\right),\qquad
 \Gamma(z)=\begin{pmatrix}0&z_1\\z_1&z_2\end{pmatrix}.
\]
Indeed, $S_ie=e$ and
$S_i^{-1}\Gamma(S_i z)S_i^{-1}=\Gamma(z)$.
Using $\norm{\Gamma(z)}\le2\norm z$ and the block-row-sum bound as in
Lemma~\ref{lem:completion}, we obtain
\begin{equation}\label{eq:bits-weighted}
 \norm{\widehat X-\widehat G_0}
 \le3\max_{1\le i\le m}\sum_{j\ne i}\norm{\widehat X_{ij}}
 \le\frac{12m\widehat\beta^2}{h}
 \le\frac3{256}<\frac13.
\end{equation}
Thus $\widehat X\succ0$ and $X\succ0$. Also $\norm u\le2\sqrt m$ and
$\norm c=\sqrt m$, so
\[
 \sym(D+uc^{\T})\succeq(h-1-2m)I_{2m}\succ0.
\]
Therefore, by Lemma~\ref{lem:certificate}\textup{(ii)}, with the above choice of $K$, $M$ is Hurwitz.

Lastly, we note the encoding size of the construction.
The displayed block assemblies produce integer matrices with $O(n)$
nonzero entries. Their magnitudes are at most
$\lambda_m=4096m^2=256N^2$ in $A$ and at most $6$ in $B$.
There are $\Theta(n^2)$ entries in the dense input and only $O(n)$
nonzero entries, each with $O(\log(n+1))$ bits. Consequently
$L_n=\Theta(n^2)$, and the construction takes polynomial time.
\end{proof}

\begin{theorem}[Discrete time]\label{thm:bits-DT}
%
For each integer $n\ge1$, matrices
$A_{\rm d}\in\mathbb Q^{N\times N}$, $B_{\rm d}\in\mathbb Q^{N\times p}$, and a nonzero
row vector $C_{\rm d}\in\mathbb Q^{1\times N}$ can be constructed in time polynomial in $n$,
with
\[
 N=8n+12,\qquad p=n+2,
\]
and total size $L_{n,\rm d}=\Theta(n^2)$,
such that a rational stabilizing controller exists, but every (real)
controller $K_{\rm d}=(y_0,\ldots,y_n,s)^{\T}$ for which $A_{\rm d}+B_{\rm d}K_{\rm d}C_{\rm d}$ is Schur satisfies
\begin{equation}\label{eq:bits-small-entry2}
 0<y_n\le\frac23\,2^{-2^n}.
\end{equation}
In particular, writing $y_n=a/b$ in lowest terms with $b>0$ for any rational
stabilizing controller gives $b\ge\frac32\,2^{2^n}$; this denominator alone
requires more than $2^n$ bits.
\end{theorem}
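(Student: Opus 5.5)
The plan is to transfer Theorem~\ref{thm:bits-CT} to discrete time with the Cayley transform of Lemma~\ref{lem:cayley}. Three things have to be checked: the transformed data has size $\Theta(n^2)$, rational stabilizing controllers survive the transform, and the small-entry bound survives as well. The last point is the real obstacle.

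\emph{Setup and size.} Let $(A,B,C)$ be the continuous-time instance of Theorem~\ref{thm:bits-CT}, with $m=2n+3$, $N=4m=8n+12$ and $p=n+2$. Choose a rational $\beta>0$, to be fixed below, with $\beta I-A$ invertible. Set $(A_{\rm d},B_{\rm d},C_{\rm d})$ as in~\eqref{eq:cayley-data}.

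For the size bound I would mimic the proof of Theorem~\ref{thm:dt-hardness}. Write $A=A_0-b_0C$ with $A_0=\left(\begin{smallmatrix}0&-D\\ I&0\end{smallmatrix}\right)$ and $b_0=(u^0;0)$. Here $b_0$ has a single nonzero entry, equal to $-2$. Then $(\beta I-A_0)^{-1}$ is computed blockwise from $(\beta^2I_2+D_i)^{-1}$, and a Sherman--Morrison update handles $b_0C$. To get a common polynomial denominator, I would, if needed, replace $\lambda_i=ih$ by $\lambda_i=\tau/i-\beta^2$, exactly as in~\eqref{eq:dt-separation}. The continuous-time argument only uses $\lambda_i\ge h$ and $|\lambda_i-\lambda_j|\ge h$, so it is unaffected. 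All entries are then $N^{O(1)}$ integers over a common denominator. This gives $L_{n,\rm d}=\Theta(n^2)$, since the matrices are dense of size $\Theta(n)\times\Theta(n)$ with $O(\log n)$-bit entries. Existence of a rational stabilizing controller follows from the rational witness~\eqref{eq:bits-witness} and the fact that $\Psi_{c\to d}$ preserves rationality.

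\emph{Transferring the bound.} By Lemma~\ref{lem:cayley}, every Schur-stabilizing $K_{\rm d}$ equals $\Psi_{c\to d}(K)$ for some Hurwitz-stabilizing $K=(x_0,\dots,x_n,t)^\T$. Hence
\[
y_n=\frac{x_n}{1-\widehat D K},\qquad 1-\widehat DK=\frac{\det(\beta I-F_c)}{\det(\beta I-A)},
\]
by the determinant identity in that proof. The numerator $\det(\beta I-F_c)=\prod_s(\beta-s)$ runs over the eigenvalues $s$ of the Hurwitz matrix $F_c$. This product is real and positive, because non-real eigenvalues come in conjugate pairs and each real factor is $\beta-s>0$. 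It also satisfies $|\det(\beta I-F_c)|\ge\beta^N$, since $|\beta-s|\ge\beta-\Ree s>\beta$. So if $\det(\beta I-A)>0$, then $y_n$ has the sign of $x_n$, and $y_n>0$ follows from $x_n>0$. Moreover, $y_n\le x_n\,\det(\beta I-A)/\beta^N$.

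\emph{Main obstacle.} The hardest step is making the ratio $\rho=\det(\beta I-A)/\beta^N$ at most $1$, or at least absorbable, because~\eqref{eq:bits-small-entry} has no spare constant. The eigenvalues of $A$ are close to $\pm i\sqrt{\lambda_k}$, so I would first try to compute $\det(\beta I-A)$ explicitly through the block structure. Up to the rank-one term $b_0C$, it equals $\prod_k(\beta^2+\lambda_k)^2$.

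If $\rho\le1$ cannot be arranged, I would fall back on two adjustments. First, sharpen the upper bound in the continuous-time chain. The bound $tx_{j+1}\le3x_j^2$ iterates with more slack if one uses the true witness regime, for instance by proving $z_0\le1/2$ with a strictly smaller constant through a modified $u_{m-1}$. Second, run the continuous-time construction with a longer chain (a larger $n'$) so that $2^{-2^{n'}}$ absorbs the factor $\rho=2^{O(n\log n)}$.

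The second option changes $N$ and $p$. To keep $N=8n+12$ and $p=n+2$ exactly as stated, I would instead make $\rho\le 1$ directly. The plan is to choose $\beta$ as the common shift in the $\lambda_i$ and verify the resulting determinant identity by hand.
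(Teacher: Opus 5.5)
Your reduction to a lower bound on $1-\widehat DK$ is sound. The sign argument for $y_n>0$ also works once you check $\det(\beta I-A)>0$, which follows from the formula below.

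\textbf{The gap: the route to $\rho\le1$ is impossible.} The estimate $\det(\beta I-F_c)\ge\beta^N$ gives only $y_n\le\rho\,x_n$ with $\rho=\det(\beta I-A)/\beta^N$, and you commit to arranging $\rho\le1$. A Schur complement in $\beta I-A$ (recall $v^0=0$) gives $\det(\beta I-A)=\det(\beta^2I_{2m}+D+u^0c^{\T})=\bigl(1-2a_m^{-2}\bigr)\prod_{k=1}^m a_k^2$ with $a_k=\beta^2+\lambda_k$. Hence $\rho=\prod_{k<m}(a_k/\beta^2)^2\cdot(a_m^2-2)/\beta^4>1$ for every $\beta>0$, as soon as all $\lambda_k>0$ and $\lambda_m\ge2$. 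The continuous-time argument needs these conditions anyway, so the common shift $\lambda_i=\tau/i-\beta^2$ cannot help. Since the continuous-time bound $x_n\le\frac23\,2^{-2^n}$ has no slack, any factor $\rho>1$ destroys the stated constant.

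Your fallbacks are not carried out. The longer chain changes $N$ and $p$. The modified $u_{m-1}$ needs a new rational witness and an explicit bound on $\rho$. You guess $\rho=2^{O(n\log n)}$; in fact $\rho\le e^2$ once $\beta^2\ge\sum_k\lambda_k$, but it is still strictly larger than $1$.

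\textbf{The missing idea.} The eigenvalue bound $|\beta-s|>\beta$ discards exactly the information that matters. For stabilizers, $1-\widehat DK$ is close to $1$, and the bound depends only on whether it is at least $1$, i.e.\ on the sign of $\widehat DK$. The paper, with $\beta=1$, computes this exactly. Set $a_i=1+\lambda_i$, $Q_0=(I_{2m}+D)^{-1}$, $\delta=1-2/a_m^2$ and $w_i=u_i-u_i^0+v_i$. The block form of $R=(I_N-A)^{-1}$ and Sherman--Morrison then give $fK=CRBK=-\delta^{-1}\sum_i\bigl((w_i)_1/a_i^2+(w_i)_2/a_i\bigr)$.

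Every Hurwitz stabilizer already satisfies $t>0$ and $0\le x_j\le t/6$, by the proof of Theorem~\ref{thm:bits-CT}. These give $(w_i)_1\ge-t$ and $(w_i)_2\ge t$, so each summand is at least $t(a_i^{-1}-a_i^{-2})>0$. Hence $1-fK>1$ and $0<y_n\le x_n\le\frac23\,2^{-2^n}$, with no loss and no change to the construction. In your language, this replaces the eigenvalue estimate by the exact rank-one identity $1-\widehat DK=\bigl(1+c^{\T}(\beta^2I+D)^{-1}(u+\beta v)\bigr)/\bigl(1+c^{\T}(\beta^2I+D)^{-1}u^0\bigr)$.

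\textbf{A smaller point on size.} Dense $\Theta(n)\times\Theta(n)$ matrices with $O(\log n)$-bit entries give $O(n^2\log n)$ bits, not $\Theta(n^2)$. The paper gets $\Theta(n^2)$ from sparsity: there are $O(n)$ nonzero entries, and only a few entries of $B_{\rm d}$ need $O(n\log n)$ bits.
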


\begin{proof}
First, we describe the construction and use a correspondence between continuous and discrete time.
To show that our construction admits a stabilizing controller, we will 
use the continuous-time matrices of Theorem~\ref{thm:bits-CT} and the
continuous-to-discrete transformation from Section~\ref{sec:dt}, with
parameter one.
For $i=1,\dots,m$, let $a_i=1+\lambda_i$ and define
\begin{equation}\label{eq:bits-Q}
 \begin{gathered}
 Q_0=\diag_{1\le i\le m}\!\left(\frac{I_2}{a_i}+\frac{J}{a_i^2}\right)
     =(I_{2m}+D)^{-1},\\
 \delta=1+c^{\T}Q_0u^0=1-\frac2{a_m^2}>0,\qquad
 Q=Q_0-\frac{Q_0u^0c^{\T}Q_0}{\delta}.
 \end{gathered}
\end{equation}
The Sherman–Morrison formula gives
$Q=(I_{2m}+D+u^0c^{\T})^{-1}$. Block multiplication then gives
\begin{equation}\label{eq:bits-R}
 R:=(I_N-A)^{-1}=\begin{pmatrix}Q&Q-I_{2m}\\Q&Q\end{pmatrix},
 \qquad c^{\T}Q=\frac{c^{\T}Q_0}{\delta}.
\end{equation}
Set
\begin{equation}\label{eq:bits-discrete-data}
 A_{\rm d}=2R-I_N,\qquad B_{\rm d}=RB,\qquad
 C_{\rm d}=2CR,\qquad f=CRB\in\mathbb Q^{1\times p}.
\end{equation}
Note that $f$ is a row vector and thus $fK=CRBK$ is a scalar for each controller
$K\in\mathbb R^{p\times1}$.

For $M=A+BKC$ and $M_{\rm d}=A_{\rm d}+B_{\rm d}K_{\rm d}C_{\rm d}$,
the matrix determinant lemma gives
\[
 \begin{aligned}
 \det(I_N-M)&=\det(I_N-A)(1-fK),\\
 \det(I_N+M_{\rm d})&=2^N\det(R)(1+fK_{\rm d}).
 \end{aligned}
\]
If $M$ is Hurwitz, then $I_N-M$ is invertible; if $M_{\rm d}$ is Schur,
then $I_N+M_{\rm d}$ is invertible. 
Thus the denominators $1 - fK$ and $1 + fK_{\rm d}$ in the inverse
controller maps
\begin{equation}\label{eq:bits-controller-map}
 K_{\rm d}=\frac{K}{1-fK},\qquad
 K=\frac{K_{\rm d}}{1+fK_{\rm d}}
\end{equation}
do not vanish at stabilizing controllers. The Sherman–Morrison formula yields
\begin{equation}\label{eq:bits-loop-map}
 M_{\rm d}=2(I_N-M)^{-1}-I_N=(I_N+M)(I_N-M)^{-1}.
\end{equation}
For any symmetric matrix $P\in\mathbb R^{N\times N}$, we have
\begin{equation}\label{eq:bits-Lyapunov-map}
 M_{\rm d}^{\T}PM_{\rm d}-P
 =2(I_N-M)^{-\T}(M^{\T}P+PM)(I_N-M)^{-1}.
\end{equation}
Lemma~\ref{lem:cayley} shows that
\eqref{eq:bits-controller-map} is a bijection between stabilizing controllers.
It preserves rationality, so the controller~\eqref{eq:bits-witness} gives a
rational Schur stabilizer.

Second, we show that the entry $y_n$ of any Schur-stabilizing controller satisfies the claimed bound. For every continuous-time stabilizer,~\eqref{eq:bits-chain} and
\eqref{eq:bits-normalized-chain} give $t>0$ and $0\le x_j\le t/6$.
Let $u_i^0\in\mathbb R^2$ be block $i$ of $u^0$, and set
$w_i=u_i-u_i^0+v_i$. 
From the definitions of $u_i$ and $v_i$ in~\eqref{eq:bits-vectors} it follows that
$(w_i)_1\ge-t$ and $(w_i)_2\ge t$ for $1\le i\le m$.
Equations~\eqref{eq:bits-Q}--\eqref{eq:bits-discrete-data} imply
\begin{equation}\label{eq:bits-gain-sign}
 fK =-\frac1\delta\sum_{i=1}^m
 \left(\frac{(w_i)_1}{a_i^2}+\frac{(w_i)_2}{a_i}\right)<0,
\end{equation}

because each summand is at least $t(a_i^{-1}-a_i^{-2})>0$.
Thus $1-fK>1$. Every Schur-stabilizing controller corresponds to such a
continuous-time controller, and therefore
\[
 0<y_n=\frac{x_n}{1-fK}\le x_n\le\frac23\,2^{-2^n}.
\]
The denominator conclusion follows from~\eqref{eq:bits-elementary}.

Lastly, we note the encoding size of the construction.
Each row and column of $Q_0$ has at most two nonzero entries, all with
$O(\log(n+1))$ bits. Since $Q_0u^0$ has only two nonzero entries,
\eqref{eq:bits-Q}--\eqref{eq:bits-discrete-data} show that $Q,R,A_{\rm d}$,
and $C_{\rm d}$ also have $O(n)$ nonzero entries, each with
$O(\log(n+1))$ bits.
For $W=U+V\in\mathbb Z^{2m\times p}$, we have
\[
 B_{\rm d}=\binom{V-QW}{-QW},\qquad
 QW=Q_0W-\frac{(Q_0u^0)(c^{\T}Q_0W)}{\delta}.
\]
Every column of $W$ except its last has at most four nonzero entries. Thus $B_{\rm d}$
has $O(n)$ nonzero entries, each of $O(\log(n+1))$ bits, except possibly
four entries in its last column. Those entries involve sums of $O(n)$
rational terms and have $O(n\log(n+1))$ bits. Including all zero entries,
the input size is therefore $L_{n,\rm d}=\Theta(n^2)$.
\end{proof}



\begin{thebibliography}{7}












\bibitem{BlondelGeversLindquist}
V.~Blondel, M.~Gevers, and A.~Lindquist,
Survey on the state of systems and control,
\emph{European Journal of Control}, \textbf{1}(1) (1995), 5--23.

\bibitem{BlondelTsitsiklis}
V.~Blondel and J.~N. Tsitsiklis,
NP-hardness of some linear control design problems,
\emph{SIAM Journal on Control and Optimization}, \textbf{35}(6) (1997), 2118--2127.

\bibitem{BoydEtAl}
S.~Boyd, L.~El~Ghaoui, E.~Feron, and V.~Balakrishnan,
\emph{Linear Matrix Inequalities in System and Control Theory},
SIAM Studies in Applied Mathematics, vol.~15,
Society for Industrial and Applied Mathematics, Philadelphia, 1994.

\bibitem{Chaudhry}
A.~Chaudhry,
\emph{Algebraic Methods in Convex Geometry, Nonlinear Optimization, and Learning Dynamical Systems},
Ph.D. thesis, Princeton University, 2024.
\url{https://dataspace.princeton.edu/handle/88435/dsp01vt150n65n}.


\bibitem{GareyJohnson}
M.~R. Garey and D.~S. Johnson,
\emph{Computers and Intractability: A Guide to the Theory of NP-Completeness},
W.~H. Freeman and Company, San Francisco, 1979.

\bibitem{grigor1988solving}
D.~Yu. Grigor'ev and N.~N. Vorobjov, Jr.,
Solving systems of polynomial inequalities in subexponential time,
\emph{Journal of Symbolic Computation}, \textbf{5} (1988).

\bibitem{Staffans}
O.~J. Staffans,
\emph{Well-Posed Linear Systems},
Encyclopedia of Mathematics and its Applications, vol.~103,
Cambridge University Press, Cambridge, 2005.

\bibitem{Wonham}
W.~M. Wonham,
\emph{Linear Multivariable Control: A Geometric Approach},
Springer-Verlag, New York, 1979.


\bibitem{Bellman}
R. Bellman,
\emph{Introduction to Matrix Analysis},
McGraw-Hill, New York, 1960.

\end{thebibliography}
\end{document}